\newtheorem{thm}{Theorem}[section]
\newtheorem{cor}[thm]{Corollary}
\newtheorem{lem}[thm]{Lemma}
\newtheorem{prop}[thm]{Proposition}
\newtheorem{ques}[thm]{Question}
\theoremstyle{definition}
\newtheorem{exe}[thm]{Example}
\newtheorem{defn}[thm]{Definition}
\newtheorem{rem}[thm]{Remark}
\newtheoremstyle{step}
    {6pt} % Space above
    {6pt} % Space below
    {\itshape} % Body font
    {} % Indent amount
    {\bfseries} % Theorem head font
    {.} % Punctuation after theorem head
    {.5em} % Space after theorem head
    {} % Theorem head spec (can be left empty, meaning `normal')
\theoremstyle{step}
\newtheorem*{mainclaim*}{Main Claim}
\newtheorem*{claim*}{Claim}
\newcounter{claimcount}
\newtheorem{claim}[claimcount]{Claim}
    \crefname{claim}{claim}{claims}
\newcounter{stepcount}
\newtheorem{step}[stepcount]{Step}
    \crefname{step}{step}{steps}
\newcounter{casecount}
\newtheorem{case}[casecount]{Case}
    \crefname{case}{case}{cases}
\Crefname{cor}{Corollary}{Corollaries}
\numberwithin{equation}{section}
\newcommand{\F}{\mathbb{F}}
\newcommand{\Z}{\mathbb{Z}}
\newcommand{\Q}{\mathbb{Q}}
\newcommand{\R}{\mathbb{R}}
\newcommand{\rA}{\mathrm{A}}
\newcommand{\rH}{\mathrm{H}}
\newcommand{\rK}{\mathrm{K}}
\newcommand{\rS}{\mathrm{S}}
\newcommand{\rZ}{\mathrm{Z}}
\newcommand{\llag}{\langle\!\langle}
\newcommand{\rrag}{\rangle\!\rangle}
\newcommand{\ol}{\overline}
\newcommand{\restr}[2]{ %
	\sbox0{\ensuremath{#1}}
	\ensuremath{
	{#1}\raisebox{-.9ex}{%
		\(\mkern2mu {\vrule height 2ex} \mkern2mu {\scriptstyle{#2}}\)}
	}}
\newcommand{\qker}{\operatorname{qker}}
\newcommand{\Hom}{\operatorname{Hom}}
\newcommand{\Gal}{\operatorname{Gal}}
\newcommand{\GL}{\operatorname{GL}}
\newcommand{\PGL}{\operatorname{PGL}}
\newcommand{\SL}{\operatorname{SL}}
\newcommand{\Soc}[1][]{\operatorname{Soc}^{#1}}
\newcommand{\SocA}{\operatorname{SocA}}
\newcommand{\SocH}{\operatorname{SocH}}
\newcommand{\ind}{\operatorname{Ind}}
\newcommand{\core}{\operatorname{core}}
\newcommand{\emphb}[1]{\emph{\emph{#1}}}
\newcommand{\ee}{1}
\newcommand{\Fi}{F}
\newcommand{\zz}{z}
\newcommand{\muF}{\mu_\Fi}
\newcommand{\muz}{\mu_\zz}
\title[Center-preserving irreducible representations of finite groups]{Center-preserving irreducible representations\\ of finite groups}
\date{10 March 2026}
\author{Pierre-Emmanuel Caprace}
    \address{Pierre-Emmanuel Caprace: UCLouvain --- IRMP, Chemin du Cyclotron 2, box L7.01.02, B-1348 Louvain-la-Neuve. }
    \email{pe.caprace@uclouvain.be}
\author{Geoffrey Janssens}
    \address{Geoffrey Janssens: UCLouvain --- IRMP, Chemin du Cyclotron 2, B-1348 Louvain-la-Neuve, and Vrije Universiteit Brussel --- Department of Mathematics and Data Science, Pleinlaan $2$, 1050 Elsene. }
	\email{geoffrey.janssens@uclouvain.be}
\author{Fran\c{c}ois Thilmany}
    \address{François Thilmany: KU Leuven --- Departement Wiskunde, Celestijnenlaan 200B, B-3001 Leuven. }
    \email{francois.thilmany@kuleuven.be}
\thanks{P.-E.\ C.\ acknowledges support from the FWO and F.R.S.-FNRS under the Excellence of Science (EOS) program (project ID 40007542). 
G.\ J.\ and F.\ T.\ are grateful to the FWO (grants 2V0722N \& 1221722N) and to the F.R.S.-FNRS (grants 1.B.239.22 \& FC 4057) for their financial support.}
\begin{document}

\begin{abstract}
Given finite groups $H \leq G$, a representation $\sigma$ of $G$ is called center-preserving on $H$ if the only elements of $H$ that become central under $\sigma$ are those that were already central in $G$. 
We prove that if $H$ has a faithful irreducible representation $\rho$, then at least one of the irreducible components of the induction $\operatorname{Ind}_H^G(\rho)$ is center-preserving on $H$. 
In consequence, $H$ has a faithful irreducible representation if and only if every finite group $G$ containing $H$ as a subgroup has an irreducible representation whose restriction to $H$ is faithful, and which is center-preserving on $H$. 
In addition, we give examples illustrating the sharpness of the statement, and discuss the connection with projective representations. 
\end{abstract}

\subjclass{20C15 (Primary), 20C05, 20C25 (Secondary)}

\maketitle

%%%%% Body %%%%%

\section{Introduction}

The study of faithful representations is a classical topic, that can be traced back all the way to Frobenius' original work on characters of finite groups. 
The question whether a given finite group admits a faithful irreducible representation can be attributed to Burnside (see \cite[Note F]{Burnside11}). 
Schur's Lemma \cite{Schur05} gave a first constraint for the existence of such a representation: a finite group which admits a faithful irreducible representation must have cyclic center; 
Fite \cite{Fite06} showed that for nilpotent groups these two conditions are in fact equivalent. 
The 20\textsuperscript{th} century subsequently produced three main criteria for the existence of a faithful irreducible representation, due respectively to Akizuki, Weisner, and Gasch\"utz, each one involving the socle of the group. 
Perhaps most popular among them is Gasch\"utz' criterion, which we will recall in \Cref{sec:Gaschutz} below. 
We refer the reader interested in additional background to the precise historical account provided in \cite[§2]{Szechtman16}. 
\medskip

The goal of the present note is to initiate the study of a different but related class of (irreducible) representations, that we named \emph{center-preserving}. 
The notion is naturally defined for any group homomorphism. 

\begin{defn} \label{def:center-preserving}
Let $\sigma \colon G \to L$ be a group homomorphism. 
We call
\[
\rZ(\sigma)= \sigma^{-1}\big(\rZ(\sigma(G))\big)
\]
the \emphb{center of $\sigma$}. 
We say that $\sigma$ is \emphb{center-preserving} when $\rZ(\sigma) = \rZ(G)$. 
Thus $\sigma \colon G \to L$ is {center-preserving} if and only if the kernel of $\sigma$ is central and the center of its image coincides with the image of the center of $G$, that is, if and only if $\ker(\sigma) \leq \rZ(G)$ and $\rZ(\sigma(G)) = \sigma(\rZ(G))$. 

Let now $H$ be a subgroup of $G$. 
We say that $\sigma$ is \emphb{center-preserving on $H$} if 
\[
\rZ(\sigma) \cap H \leq \rZ(G).
\]
In particular, $\sigma$ is center-preserving on $G$ if and only if it is center-preserving. 
In that case, $\sigma$ is center-preserving on every subgroup. 
\end{defn}

Of course, any injective homomorphism from $G$ is center-preserving, and by definition any center-preserving homomorphism has a central kernel. 
However, a homomorphism with central kernel need not be center-preserving, and a center-preserving homomorphism need not be injective. 
Moreover, a homomorphism $\sigma \colon G \to L$ whose restriction to $H$ is faithful, need not be center-preserving on $H$ (unless $H$ is normal, see \Cref{rem:centerpreservingnormalsubgroup}). 
\smallskip

The following conventions  will be in effect throughout the entire paper. 
\smallskip

\noindent \textbf{Notation.} 
Throughout, $G$ denotes a finite group and $\Fi$ is a field of cross-characteristic, that is, the characteristic of $\Fi$ does not divide the order of $G$. 
A finite-dimensional, linear representation over $\Fi$ of an arbitrary group $\mathscr G$ will simply be called a representation of $\mathscr G$. 
The center of $\mathscr G$ is denoted $\rZ(\mathscr G)$, and given a representation $\sigma$ of $\mathscr G$, we denote its center by $\rZ(\sigma)$. 
As defined above, $\rZ(\sigma)$ is the preimage under $\sigma$ of the center of $\sigma(\mathscr G)$.\footnote{
In view of Schur's lemma, when $\sigma$ is an irreducible representation of $G$ and $\Fi$ contains all $|G|$\textsuperscript{th} roots of unity, $\rZ(\sigma)$ is characterized as the normal subgroup of $G$ consisting of those $g \in G$ such that $\sigma(g)$ is a scalar operator. 
In this case, it coincides with the \emph{quasikernel} (or \emph{projective kernel}) of $\sigma$, that is, the kernel of the projectivization of $\sigma$. 
%In general, $\rZ(\sigma)$ is the normal subgroup of $G$ consisting of those $g \in G$ such that $\sigma(g)$ lies in the center of the commutant of $\sigma(G)$, a product of finite extensions of $\Fi$. 
}

\smallskip
The main result of this note is the following. 

\begin{thm} \label{thm:faithfulcenterpreservingirrep}
Let $H \leq G$ be finite groups. 
Suppose that $H$ has a faithful irreducible representation $\rho$. 
Then one of the irreducible constituents of $\ind_H^G(\rho)$ is center-preserving on $H$. 
\end{thm}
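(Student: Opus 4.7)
My plan has three parts: a basic reduction via Frobenius reciprocity, a character-theoretic key identity, and an inductive upgrade to a single constituent.

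For any irreducible constituent $\sigma$ of $\ind_H^G(\rho)$, Frobenius reciprocity gives $\rho \leq \sigma|_H$, so $\sigma|_H$ is $H$-faithful, and if $h \in \rZ(\sigma) \cap H$, then $\sigma(h)$ is scalar, which restricted to the $\rho$-summand makes $\rho(h)$ scalar, forcing $h \in \rZ(\rho) = \rZ(H)$. Hence $\rZ(\sigma) \cap H \leq \rZ(H)$, and since $\rho$ is a faithful irreducible, $\rZ(H)$ is cyclic, say $\rZ(H) = \langle z \rangle$, with faithful central character $\lambda = \rho|_{\rZ(H)}$.

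The core of the argument is the identity
\[
\bigcap_\sigma \bigl(\rZ(\sigma) \cap H\bigr) = \rZ(G) \cap H,
\]
the intersection taken over constituents $\sigma$ of positive multiplicity. The inclusion $\supseteq$ is immediate since $\rZ(G)$ acts by scalars on every irreducible of $G$. For $\subseteq$, I argue by contradiction: if some $h \in H \setminus \rZ(G)$ lies in $\rZ(\sigma)$ for every $\sigma$, then each $\sigma(h) = \lambda(h) I$ yields $\chi_\rho^G(h) = \lambda(h)[G:H]\dim\rho$. Comparing with the Frobenius formula $\chi_\rho^G(h) = \frac{[G:H]}{|h^G|}\sum_{y \in H \cap h^G} \chi_\rho(y)$ and applying the triangle inequality (noting that $|\chi_\rho(y)| \leq \dim\rho$ with equality iff $y \in \rZ(H)$, together with faithfulness of $\lambda$) would force $h^G = \{h\}$, i.e., $h \in \rZ(G)$, a contradiction.

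Setting $d = [\rZ(H) : \rZ(G) \cap H]$, the subgroups $\rZ(\sigma) \cap H$ all contain $\langle z^d \rangle$ and have intersection $\langle z^d \rangle$; we need a single one equal to $\langle z^d \rangle$. When $d$ is a prime power, the subgroups of $\rZ(H)/\langle z^d \rangle \cong \Z/d$ are totally ordered by inclusion, and triviality of the intersection forces at least one to be trivial, completing the argument in that case. For general $d$, I would proceed by strong induction on $d$: for a prime $p \mid d$, pass to $G' = C_G(z^{d/p})$, which still contains $H$ and has $d' := [\rZ(H) : \rZ(G') \cap H]$ a proper divisor of $d$; the inductive hypothesis then yields a constituent $\tau$ of $\ind_H^{G'}(\rho)$ center-preserving on $H$ with respect to $G'$.

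The main obstacle I anticipate is the lifting step: passing from $\tau$ to a constituent $\sigma$ of $\ind_{G'}^G(\tau) \subseteq \ind_H^G(\rho)$ that is center-preserving on $H$ with respect to $G$ rather than merely $G'$. Every constituent of $\ind_{G'}^G(\tau)$ automatically inherits the weaker bound $\rZ(\cdot) \cap H \leq \rZ(\tau) \cap H \leq \rZ(G')$; sharpening this to $\rZ(G)$ requires a secondary application of the key character identity, now to the pair $G' \leq G$ and the representation $\tau$, exploiting that $\tau|_H$ is faithful, that $\rZ(H)$ is cyclic, and the centralizer structure of $G'$ inside $G$.
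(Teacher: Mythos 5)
Your Part 1 and the prime-power case are correct and coincide with what the paper itself observes (Remark 1.4): Frobenius reciprocity gives $\rZ(\sigma) \cap H \leq \rZ(H)$ for every constituent $\sigma$, and when the relevant index is a prime power the subgroups of the cyclic group $\rZ(H)$ containing $\rZ(G)\cap H$ are totally ordered, so a constituent minimizing $\rZ(\sigma)\cap H$ works. Your character-theoretic identity $\bigcap_\sigma (\rZ(\sigma)\cap H) = \rZ(G)\cap H$ is also correct (over an algebraically closed field of characteristic $0$, say; the paper works over arbitrary cross-characteristic fields, where "scalar'' arguments need extra care): the equality case of the triangle inequality in the Frobenius character formula, combined with \emph{faithfulness of $\rho$}, forces $h^G = \{h\}$. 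But this identity is strictly weaker than the theorem, and the passage from it to a \emph{single} constituent is exactly where the theorem's difficulty lies. Your proposed induction on $d = [\rZ(H):\rZ(G)\cap H]$ via $G' = C_G(z^{d/p})$ reduces $d$ correctly, but the lifting step --- which you yourself flag as the main obstacle --- has a genuine gap, and the fix you sketch does not work.

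Concretely: suppose $h \in (\rZ(G')\cap H) \setminus \rZ(G)$ lies in $\rZ(\sigma)$ for every constituent $\sigma$ of $\ind_{G'}^G(\tau)$, and run your "secondary application'' of the character identity to the pair $G' \leq G$ and $\tau$. The equality case now only yields that $\tau(y) = \tau(h)$ for every $G$-conjugate $y$ of $h$, i.e.\ $[h,G] \subseteq \ker(\tau)$ --- \emph{not} a contradiction, because $\tau$ need not be faithful on $G'$. The faithfulness of $\rho$ on $H$ was the engine of your identity, and it is unavailable here: the inductive hypothesis only controls $\rZ(\tau)\cap H$ and gives $\ker(\tau)\cap H = \{\ee\}$, but $\ker(\tau)$ can be a large normal subgroup of $G'$ meeting $H$ trivially. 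Indeed, if the normal closure of $[h,G]$ happens to lie in $\core_G(\ker(\tau))$, then \emph{every} constituent of $\ind_{G'}^G(\tau)$ centralizes $h$, and your induction dies above that particular $\tau$; nothing in your construction of $\tau$ rules this out. Repairing this would require choosing $\tau$ with control over its kernel relative to the normal subgroup structure of $G$, and coordinating this across all primes dividing $d$ simultaneously --- which is essentially the original problem. This is precisely why the paper abandons character estimates and instead builds a group-theoretic apparatus: Gasch\"utz' criterion, Goursat's lemma, and a double induction (on the set of "bad'' prime-power central elements $h_i$ and on $|G|$) whose hard case (Step 4 of Theorem 3.5) produces, via a module-theoretic independence condition on the abelian minimal normal subgroups (Proposition 3.4), an irreducible representation of $G$ that is genuinely \emph{faithful} and contains $\rho$ --- thereby restoring the faithfulness your argument lacks.
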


It can happen that $\ind_H^G(\rho)$ has multiple irreducible components, among which exactly one is center-preserving on $H$ (see \Cref{exe:exactly-one}). 

\smallskip
While it is clear that any faithful representation of $G$ is center-preserving (on every subgroup), the converse clearly fails. 
For example, every representation of an abelian group is center-preserving, but unless this abelian group is cyclic, none of its irreducible representations are faithful. 
We shall provide a characterization of the finite groups admitting a center-preserving irreducible representation in \Cref{cor:centerpreserving}. 

For now, we record the following corollary of \Cref{thm:faithfulcenterpreservingirrep}, providing a new characterization of the finite groups that admit a faithful irreducible representation. 

\begin{cor}\label{cor:faithfulcenterpreservingirrep}
 For a finite group $H$, the following conditions are equivalent. 
\begin{enumerate}[leftmargin=2em,itemsep=.2ex,label=\textup{(\roman*)}]
    \item $H$ has a faithful irreducible representation.
    \item For any injective homomorphism $\iota \colon H \to G$, the finite group $G$ has an irreducible representation $\sigma$ such that $\sigma \circ \iota $ is injective and $\sigma$ is center-preserving on $\iota(H)$. 
\end{enumerate}   
\end{cor}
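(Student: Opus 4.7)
The plan is to deduce both implications from \Cref{thm:faithfulcenterpreservingirrep}, viewing that theorem as the main content and this corollary as its natural packaging.

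The reverse implication (ii) $\Rightarrow$ (i) is immediate by specializing to $G = H$ and $\iota = \id_H$: condition (ii) then supplies an irreducible representation $\sigma$ of $H$ whose composition with the identity --- that is, $\sigma$ itself --- is injective, giving a faithful irreducible representation of $H$.

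For the forward implication (i) $\Rightarrow$ (ii), I fix a faithful irreducible representation $\rho$ of $H$ and an arbitrary injective homomorphism $\iota \colon H \to G$. After identifying $H$ with $\iota(H) \leq G$, I apply \Cref{thm:faithfulcenterpreservingirrep} to extract an irreducible constituent $\sigma$ of $\ind_H^G(\rho)$ that is center-preserving on $H$. The second clause of (ii), namely $\rZ(\sigma) \cap \iota(H) \leq \rZ(G)$, is then just the definition of being center-preserving on $H$. To obtain the first clause --- injectivity of $\sigma \circ \iota$, equivalently of the restriction of $\sigma$ to $H$ --- I invoke Frobenius reciprocity: $\sigma$ being a constituent of $\ind_H^G(\rho)$ is equivalent to $\rho$ being a constituent of $\res_H^G(\sigma)$. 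Since $\ker(\sigma \circ \iota)$ acts trivially on every $H$-invariant subspace of $\res_H^G(\sigma)$, and in particular on a copy of $\rho$, it must lie in $\ker(\rho) = \{1\}$, yielding injectivity.

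I do not expect a real obstacle: the corollary is essentially a restatement of \Cref{thm:faithfulcenterpreservingirrep}, the only extra ingredient being the standard Frobenius-reciprocity observation that an irreducible constituent of $\ind_H^G(\rho)$ contains $\rho$ in its restriction to $H$, and therefore remains faithful on $H$ whenever $\rho$ is faithful.
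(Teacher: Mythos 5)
Your proposal is correct and follows essentially the same route as the paper: (ii) $\Rightarrow$ (i) by taking $\iota = \id_H$, and (i) $\Rightarrow$ (ii) by applying \Cref{thm:faithfulcenterpreservingirrep} together with the Frobenius-reciprocity observation that any irreducible constituent $\sigma$ of $\ind_H^G(\rho)$ contains $\rho$ in its restriction to $H$, hence is faithful on $H$.
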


The implication (ii) $\implies$ (i) is immediate, by taking $\iota$ the identity map on $H$. 
The converse is a direct consequence of \Cref{thm:faithfulcenterpreservingirrep}. 
Indeed, by Frobenius reciprocity, the restriction $\restr{\sigma}{H}$ of any irreducible constituent $\sigma$ of $\ind_H^G(\rho)$ contains $\rho$, and is therefore faithful on $H$. 

\begin{rem} \label{rem:specialcasemaintheorem}
This last observation can in fact be used to prove a special case of \Cref{thm:faithfulcenterpreservingirrep} as follows. 
If $\sigma$ is a representation of $G$ whose restriction to $H$ is faithful, then $\rZ(\sigma) \cap H \leq \rZ(H)$. 
By Frobenius reciprocity, this holds in particular if $\sigma$ is a constituent of $\ind_H^G(\rho)$ with $\rho$ faithful irreducible. 
Since $H$ has a faithful irreducible representation by hypothesis, $\rZ(H)$ must be a cyclic group. 
In the special case where the subgroups of $\rZ(H)$ are totally ordered by inclusion (equivalently, when the order of the cyclic group $\rZ(H)$ is a prime power), it is not difficult to verify that any irreducible constituent of $\ind_H^G(\rho)$ that minimizes $\rZ(\sigma) \cap H$ is actually center-preserving on $H$. 
Thus, in that special case, \Cref{thm:faithfulcenterpreservingirrep} may be viewed as a fairly direct consequence of Frobenius reciprocity. 
The proof we give for the general case involves a much more elaborate argument (and many more invocations of Frobenius reciprocity). 
\end{rem}

\begin{rem}
Upon reading the statement of \Cref{thm:faithfulcenterpreservingirrep}, one could expect that if $H$ has a center-preserving irreducible representation, then any finite group $G$ containing $H$ as a subgroup has an irreducible representation which is center-preserving on $H$. 
This is however not true, as we shall see in \Cref{exe:D4D4D4} below. 
\end{rem}

In the last section, we will discuss the connection between center-preserving linear representations and faithful projective representations. 
From \Cref{thm:faithfulcenterpreservingirrep}, one can namely deduce existence results for irreducible projective representations that are faithful on a subgroup. 
Here is one such corollary (see also \Cref{cor:centerpreservingprojirrep2} for a more general statement). 

\begin{cor}[of \Cref{thm:faithfulcenterpreservingirrep}] \label{cor:centerpreservingprojirrep}
Let $H \leq G$, and suppose that $H$ admits a faithful irreducible representation. 
Then there exists an irreducible $1$-projective representation $\ol{\sigma}$ of $G$ such that $\ker(\ol{\sigma}) \cap H \leq \rZ(G)$. 

In particular, if $\rZ(G) \cap H = \{\ee\}$, then $\ol{\sigma}$ is an irreducible $1$-projective representation of $G$ which is faithful on $H$. 
\end{cor}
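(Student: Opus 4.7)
The plan is to deduce this directly from \Cref{thm:faithfulcenterpreservingirrep} by projectivizing a suitable linear representation of $G$. I would first invoke the theorem to obtain an irreducible linear representation $\sigma \colon G \to \GL(V)$ appearing as a constituent of $\ind_H^G(\rho)$ that is center-preserving on $H$, so that $\rZ(\sigma) \cap H \leq \rZ(G)$.

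Next, let $\ol{\sigma} \colon G \to \PGL(V)$ denote the composition of $\sigma$ with the quotient $\GL(V) \to \PGL(V)$. By construction $\ol{\sigma}$ lifts to the honest linear representation $\sigma$, hence it is a $1$-projective representation, i.e.\ its associated cocycle is (cohomologous to) the trivial one. The invariant subspaces of $V$ for $\ol{\sigma}$ coincide with those for $\sigma$, so $\ol{\sigma}$ inherits the irreducibility of $\sigma$.

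The remaining point is to relate $\ker(\ol{\sigma})$ to $\rZ(\sigma)$. By definition, $\ker(\ol{\sigma})$ consists of those $g \in G$ for which $\sigma(g)$ is a scalar operator; since scalar operators are central in $\sigma(G)$, we have $\ker(\ol{\sigma}) \leq \rZ(\sigma)$ (equality holds when $\Fi$ contains all $|G|$\textsuperscript{th} roots of unity, as noted in the footnote following \Cref{def:center-preserving}, but the inclusion is all that is needed). Combined with the center-preserving property, this yields
\[
\ker(\ol{\sigma}) \cap H \,\leq\, \rZ(\sigma) \cap H \,\leq\, \rZ(G),
\]
which is the first assertion. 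The second is immediate: if $\rZ(G) \cap H = \{\ee\}$, then $\ker(\ol{\sigma}) \cap H = \{\ee\}$, so $\ol{\sigma}$ is faithful on $H$.

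All the substantive work is absorbed into \Cref{thm:faithfulcenterpreservingirrep}; the remaining steps amount to a routine translation between linear and projective representations. The only point that deserves a moment's attention is confirming that the inclusion $\ker(\ol{\sigma}) \leq \rZ(\sigma)$ holds over an arbitrary field of cross-characteristic, which follows simply from scalar operators being central in $\sigma(G)$, with no hypothesis on roots of unity required.
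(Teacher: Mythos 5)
Your proof is correct and is essentially the paper's own argument: the paper dispatches this corollary in one line by observing that $\ker(\ol{\sigma}) = \qker(\sigma) \leq \rZ(\sigma)$ for the center-preserving constituent $\sigma$ supplied by \Cref{thm:faithfulcenterpreservingirrep}, which is exactly the chain of inclusions you spell out. Your added details — that the inclusion $\ker(\ol{\sigma}) \leq \rZ(\sigma)$ requires no hypothesis on roots of unity, and that projectivization preserves irreducibility and yields a $1$-projective representation — are correct elaborations of that same route.
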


Indeed, the center $\rZ(\sigma)$ of a representation $\sigma$ of $G$ always contains its quasikernel $\qker(\sigma)$. 

\medbreak

Dynamics on projective spaces are a valuable tool in geometric group theory. 
That purpose was in fact our original motivation for studying center-preserving representations: with the ultimate aim to construct free products in the group of units $R[G]^\times$ of the group ring of $G$ over a subring $R$ of $F$, representations of $G$ that are center-preserving on a subgroup $H$ arose naturally in \cite{JTT25} as a mean to ensure that the corresponding projective representation of $G$ has smallest possible kernel when restricted to $H$. 
For example, using the machinery developed in \cite{JTT25}, the following result is a direct consequence of \Cref{cor:centerpreservingprojirrep}. 

\begin{cor}[Corollary 5.15 in {\cite{JTT25}}] \label{cor:existenceamalgamarbitraryfield}
Suppose that $\Fi$ has characteristic $0$. 
Let $H \leq G$ be a subgroup possessing a faithful irreducible representation. 
Set $C = H \cap \rZ(G)$. 
The set of regular semisimple elements $\gamma \in \Fi[G]^\times$ of infinite order with the property that the canonical map
\[
\langle \gamma, C \rangle \ast_{C} H \to \langle \gamma, H \rangle
\]
is an isomorphism, is Zariski-dense in the multiplicative group of $\Fi[G]$. 
\end{cor}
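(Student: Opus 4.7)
The plan is to combine \Cref{thm:faithfulcenterpreservingirrep} with the free-product machinery of \cite{JTT25}: the theorem supplies an irreducible representation of $G$ whose restriction to $H$ has kernel and center as small as one could hope for, and such a representation is precisely the input the construction needs.

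Concretely, I would first apply \Cref{thm:faithfulcenterpreservingirrep} to the given faithful irreducible representation $\rho$ of $H$, obtaining an irreducible constituent $\sigma$ of $\ind_H^G(\rho)$ that is center-preserving on $H$. By Frobenius reciprocity, $\restr{\sigma}{H}$ contains $\rho$ and is in particular faithful on $H$. Since $\sigma(\rZ(G))$ always commutes with $\sigma(G)$, one has $\rZ(G) \leq \rZ(\sigma)$, so combining this inclusion with $\rZ(\sigma) \cap H \leq \rZ(G)$ gives
\[
\rZ(\sigma) \cap H \,=\, H \cap \rZ(G) \,=\, C.
\]
As in \Cref{cor:centerpreservingprojirrep}, the associated projective representation $\ol{\sigma}$ of $G$ has $\ker(\ol{\sigma}) \cap H = C$, so it induces a faithful projective representation of $H/C$.

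I would then feed $\sigma$ (or equivalently $\ol{\sigma}$) into the framework of \cite[§5]{JTT25}. The idea developed there is to view $(\Fi G)^\times$ as an affine $\Fi$-variety via a faithful matricial representation of $\Fi G$, and to exploit the fact that for generic $\gamma$ the operator $\sigma(\gamma)$ is regular semisimple, of infinite order, with eigenlines in sufficiently general position with respect to $\ol{\sigma}(H)$. A ping-pong type argument on the projective space of $\sigma$ then shows that the locus of $\gamma \in (\Fi G)^\times$ at which the canonical surjection $\langle \gamma, C \rangle \ast_{C} H \to \langle \gamma, H \rangle$ fails to be injective is cut out by the vanishing of finitely many regular functions, hence is contained in a proper Zariski-closed subset.

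The principal obstacle is the density statement itself, which is the technical core of \cite{JTT25} and is where the choice of amalgamation subgroup really matters: the equality $\rZ(\sigma) \cap H = C$ furnished by \Cref{thm:faithfulcenterpreservingirrep} is exactly what ensures that the amalgam is over $C$ rather than over a larger subgroup of $H$. Once that equality is in hand, the corollary follows by a direct appeal to the framework of \cite{JTT25}, with no further interaction with the representation theory of $G$.
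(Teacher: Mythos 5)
Your proposal takes essentially the same route as the paper, which likewise gives no self-contained argument: it obtains $\sigma$ from \Cref{thm:faithfulcenterpreservingirrep} (so that $\restr{\sigma}{H}$ is faithful and $\rZ(\sigma) \cap H = H \cap \rZ(G) = C$, the key equality making $C$ the amalgamation subgroup) and then defers the Zariski-density and ping-pong work entirely to the machinery of \cite{JTT25}, where the statement is proved as Corollary 5.15. One peripheral slip: over a field that is not $|G|$-closed, Schur's lemma yields only $\ker(\ol{\sigma}) \cap H \leq C$ rather than the equality you assert, but nothing in the argument uses that equality.
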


More precise applications of \Cref{thm:faithfulcenterpreservingirrep} are provided in \cite{JTT25}. 
Namely, for $G$ not a Dedekind group, \cite[Theorem~5.3]{JTT25} establishes the existence of irreducible representations of $G$, center-preserving on $H$, and for which the image of $G$ is not a Frobenius complement. 
This is then used to find ping-pong partners for $H$ among the bicyclic units of the group ring $\Fi[G]$, see \cite[Theorem~6.9]{JTT25}. 
\medbreak

\noindent \textbf{Outline.} 
Our approach to proving \Cref{thm:faithfulcenterpreservingirrep} is of group theoretic nature, rather than representation theoretic. 
A key ingredient is Gasch\"utz' characterization of the finite groups possessing a faithful irreducible representation \cite{Gaschutz54}, whose statement and proof are recalled in \Cref{sec:Gaschutz}, as well as some of its avatars \cite{BH08,CH20}. 

Without further ado, \Cref{subsec:proof} takes us to the proof of \Cref{thm:faithfulcenterpreservingirrep}, and in \Cref{subsec:examplescpr}, we give a few examples highlighting the logical independence between faithfulness and center-preservingness on a subgroup. 

Last but not least, we discuss in \Cref{sec:fpr} the projective counterparts of center-preserving linear representations. 
After a brief reminder (\Cref{subsec:projectiverepresentations}), we give a criterion for the existence of a center-preserving irreducible representation in \Cref{subsec:absolutefpr}, and relate it to the existence of the newly introduced $c$-faithful $c$-projective representations. 
After the examples of \Cref{subsec:examplesfpr}, we conclude this note with a couple of consequences of \Cref{thm:faithfulcenterpreservingirrep} concerning irreducible projective representations that are faithful on a subgroup (see \Cref{subsec:relativefpr}), opening the door to their study.

\section{Groups with a faithful irreducible representation} \label{sec:Gaschutz}

In order to formulate Gasch\"utz' Theorem, we recall that the \emphb{socle} of a finite group $G$, denoted $\Soc(G)$, is the subgroup of $G$ generated by all the minimal normal subgroups of $G$. 
The \emphb{abelian part of the socle}, denoted by $\SocA(G)$, is the subgroup of $G$ generated by all the minimal normal subgroups of $G$ that are abelian. 
The subgroup $\SocA(G)$ is indeed abelian, and is a direct factor of $\Soc(G)$ (see \Cref{lem:minimal-normal}). 

\begin{thm}[{See \cite{Gaschutz54}, \cite[Theorem~2]{BH08}}] \label{thm:Gaschutz}
For a finite group $G$, the following conditions are equivalent. 
\begin{enumerate}[leftmargin=2em,itemsep=.2ex,label=\textup{(\roman*)}]
\item \label{item:Gaschutz1} $G$ has a faithful irreducible representation. 
\item \label{item:Gaschutz2} $\Soc(G)$ is generated by a single conjugacy class. 
\item \label{item:Gaschutz3} $\SocA(G)$ is generated by a single conjugacy class. 
\item \label{item:Gaschutz4} Every normal subgroup of $G$ contained in $\SocA(G)$  is generated by a single conjugacy class. 
\end{enumerate}
\end{thm}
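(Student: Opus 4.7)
The plan is to establish the four equivalences by linking the representation-theoretic condition (i) to the $\Z[G]$-module structure of $\SocA(G)$ via Clifford theory and duality, and to reduce the full socle to its abelian part by structural arguments. I would first pass to an algebraic closure of $\Fi$: under the cross-characteristic hypothesis, a finite group has a faithful irreducible representation over $\Fi$ iff it has one over $\ol\Fi$ (via Galois descent, since the direct sum over a Galois orbit of a faithful absolutely irreducible representation remains faithful and becomes irreducible over $\Fi$).

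For the equivalences (ii)$\Leftrightarrow$(iii)$\Leftrightarrow$(iv): the abelian socle $A := \SocA(G)$ is a semisimple $\Z[G]$-module, and the condition ``generated by a single $G$-conjugacy class'' translates to ``cyclic as a $\Z[G]$-module.'' Cyclicity of a semisimple $\Z[G]$-module is characterized by a multiplicity bound on each simple summand, which is inherited by $G$-invariant submodules; this yields (iii)$\Rightarrow$(iv), while (iv)$\Rightarrow$(iii) is immediate. For (iii)$\Leftrightarrow$(ii), decompose $\Soc(G) = A \times N^{\mathrm{nab}}$, where $N^{\mathrm{nab}}$ denotes the product of the non-abelian minimal normal subgroups of $G$, and verify that $N^{\mathrm{nab}}$ is always $G$-cyclic via a Goursat-type argument exploiting the simplicity of its composition factors.

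For the forward direction (i)$\Rightarrow$(iii) of the main equivalence: given a faithful irreducible representation $\rho \colon G \to \GL(V)$, Clifford theory applied to the normal abelian subgroup $A$ yields $\rho|_A = e \bigoplus_{\chi \in \mathcal{O}} \chi$ for some common multiplicity $e$ and a single $G$-orbit $\mathcal{O}$ of linear characters of $A$. Faithfulness of $\rho$ forces $\bigcap_{\chi \in \mathcal{O}} \ker\chi = 1$, so by Pontryagin duality for finite abelian groups, $\mathcal{O}$ generates the dual group $\hat A$, making $\hat A$ into a $G$-cyclic module. The self-duality of the group algebra under $g \mapsto g^{-1}$ preserves the multiplicity of each simple module in the regular representation, and hence transfers cyclicity from $\hat A$ back to $A$.

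The reverse direction (iii)$\Rightarrow$(i) is the main obstacle: from the cyclicity of $\SocA(G)$ one must construct an irreducible representation of $G$ with trivial kernel. My plan is to pick $\lambda \in \hat A$ whose $G$-orbit generates $\hat A$, and consider $\tau := \ind_A^G(\lambda)$. By Frobenius reciprocity combined with Clifford theory, the restriction $\sigma|_A$ of any irreducible constituent $\sigma$ of $\tau$ contains the full $G$-orbit of $\lambda$, so $\ker\sigma \cap A = 1$; this already ensures faithfulness on every abelian minimal normal subgroup. A Mackey-type computation further shows that $\tau|_{N_j}$ is a multiple of the regular representation of each non-abelian minimal normal subgroup $N_j$, so some irreducible constituent of $\tau$ is faithful on $N_j$; however, coordinating the choice of a single $\sigma$ that is \emph{simultaneously} faithful on every minimal normal subgroup is the delicate point. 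I would resolve this by an inclusion–exclusion / M\"obius inversion argument on the lattice of products of minimal normal subgroups, exploiting that the irreducibles with kernel containing a fixed normal subgroup $T$ contribute precisely $|G/T|$ to the identity $\sum_\sigma (\dim\sigma)^2 = |G|$, while the cyclicity hypothesis caps the degree of ``overlap'' between the kernels and yields a strictly positive contribution from faithful irreducibles. This counting step is the heart of Gasch\"utz's theorem.
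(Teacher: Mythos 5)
Your reduction to $\ol{\Fi}$, your proof of (i) $\implies$ (iii) via Clifford theory and duality, and your treatment of the equivalences (ii) $\iff$ (iii) $\iff$ (iv) all match the paper's arguments in substance. The genuine gap is in (iii) $\implies$ (i), precisely at the point you yourself flag as delicate. Inducing $\lambda$ from $A = \SocA(G)$ does make every irreducible constituent of $\tau = \ind_A^G(\lambda)$ faithful on $A$, and Mackey does show that each non-abelian minimal normal subgroup is ``seen'' by \emph{some} constituent; but nothing you write produces a single constituent faithful on all minimal normal subgroups simultaneously. The proposed inclusion--exclusion/M\"obius argument is not carried out, and its key assertion --- that cyclicity of $\SocA(G)$ ``caps the overlap'' of the kernels and forces a strictly positive contribution from faithful irreducibles --- is exactly the statement to be proved; no mechanism for it is given. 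There is also a mismatch of scope: the identity $\sum_\sigma (\dim \sigma)^2 = |G|$ and the counts $|G/T|$ range over \emph{all} irreducible representations of $G$, not over the constituents of $\tau$, so even a completed counting argument would not interact with your chosen $\lambda$. Finally, the inversion would have to run over the lattice of joins of minimal normal subgroups, in which distinct sets of atoms can have equal joins (already in $C_p \times C_p$, whose $p+1$ atoms pairwise join to the whole group); there the signed sum equals $p^2 - (p+1)p + p = 0$, consistent with (iii) failing, which shows the positivity genuinely encodes the hypothesis and cannot be waved through.

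The paper sidesteps the coordination problem altogether by inducing from the full socle rather than from its abelian part. Since $\SocH(G)$ is a direct product of non-abelian finite simple groups, it is centerless and has a faithful irreducible representation $\pi$ (an external tensor product of non-trivial irreducibles of the factors: a tensor factor can act by a scalar only on a central, hence trivial, element). Take any irreducible constituent $\sigma$ of $\ind_{\Soc(G)}^G(\lambda \otimes \pi)$, where $\Soc(G) = \SocA(G) \times \SocH(G)$. By Frobenius reciprocity and Clifford theory, $\restr{\sigma}{\Soc(G)}$ contains $\lambda^g \otimes \pi^g$ for every $g \in G$. Since $\SocH(G)$ is centerless and $\pi^g$ is faithful, one has $\ker(\lambda^g \otimes \pi^g) = \ker(\lambda^g) \times \{\ee\}$, and since the $G$-orbit of $\lambda$ generates $\widehat{A}$, the intersection of these kernels over $g \in G$ is trivial. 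Hence $\sigma$ is faithful on $\Soc(G)$, so $\ker(\sigma)$ contains no minimal normal subgroup, and $\sigma$ is faithful. Replacing $\ind_A^G(\lambda)$ by this induction from $\Soc(G)$ makes your argument complete and renders the counting step unnecessary.
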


Note that Gasch\"utz \cite{Gaschutz54} originally considered the base field $\Fi$ to be algebraically closed. 
Nonetheless, the proof he gives only relies on the semisimplicity of the group algebra $\Fi[G]$ and on the good behavior of the canonical maps $\Fi[G] \leftrightarrows \Fi[G/N]$, for $N \triangleleft G$.
His result is thus equally valid (with the same proof) over a field $\Fi$ whose characteristic does not divide $|G|$. 

Alternatively, observe that $G$ has a faithful irreducible representation over $\Fi$ if and only if $G$ has a faithful irreducible representation over any algebraic closure $\bar{\Fi}$ of $\Fi$. 
This is a consequence of the fact that two $\bar{\Fi}$-representations of $G$ belonging to the same $\Gal(\bar{\Fi}/\Fi)$-orbit (of $\bar{\Fi}$-representations up to equivalence) have the same kernel. 
Indeed, if $\rho$ is an irreducible $\Fi$-representation of $G$, the extension $\rho_{\bar{\Fi}}$ of $\rho$ to $\bar{\Fi}$ decomposes into a sum of irreducible $\bar{\Fi}$-representations belonging to a single orbit of $\Gal(\bar{\Fi}/\Fi)$, and each irreducible $\bar{\Fi}$-representation $\sigma$ of this orbit occurs in the decomposition of $\rho_{\bar{\Fi}}$ with the same multiplicity $m$ (equal to the Schur index of $\sigma$ over $\Fi$ by definition). 
Thus $\ker(\sigma) = \ker(\rho_{\bar{\Fi}}) = \ker(\rho)$ is trivial as soon as $\rho$ is faithful. 
Conversely, if $\sigma$ is an irreducible $\bar{\Fi}$-representation of $G$, let $\rho$ be an irreducible $\Fi$-representation of $G$ whose extension $\rho_{\bar{\Fi}}$ to $\bar{\Fi}$ contains $\sigma$. 
Then also this way around, $\ker(\rho) = \ker(\rho_{\bar{\Fi}}) = \ker(\sigma)$ is trivial when $\sigma$ is faithful. 

We refer the reader to \cite[{IX} §4]{BKZ18} for a modern treatment of the various criteria concerning the existence of faithful irreducible representations, and to \cite[XXIII]{BKZ19} for the behavior of representations under field extensions. 
For the convenience of the reader, we include here a slightly different proof of \Cref{thm:Gaschutz} in the present setting, inspired by \cite{BH08}. 

\begin{proof}[Proof of \Cref{thm:Gaschutz}]
Recall that the Pontryagin dual of a finite abelian group $N$ is the group $\widehat{N} = \Hom(N, \Q / \Z)$ of homomorphisms $N \to \Q / \Z$, endowed with the operation of point-wise multiplication. 
As $\Fi$ is a field of cross-characteristic, every homomorphism $\chi \colon N \to \Q / \Z$ factors through the torsion subgroup of the multiplicative group $\bar{\Fi}^\times$ of any algebraic closure $\bar{\Fi}$ of $\Fi$, so that we can (and will) identify $\widehat{N}$ with $\Hom(N, \bar{\Fi}^\times)$. 

\smallskip
Set now $N = \SocA(G)$. 
Suppose that $\rho$ is a faithful irreducible $\Fi$-representation of $G$. 
As we have seen above, any irreducible constituent $\sigma$ of its extension $\rho_{\bar{\Fi}}$ to the algebraic closure $\bar{\Fi}$ is a faithful irreducible $\bar{\Fi}$-representation of $G$. 
Let $\restr{\sigma}{N} = n_1 \chi_1 \oplus \dots \oplus n_k \chi_k$ be a decomposition into irreducibles of the restriction of $\sigma$ to $N$. 
As $N$ is abelian, each $\chi_i$ can now be seen as an element of the Pontryagin dual $\widehat N$ of $N$. 
By Clifford's theorem, the $G$-orbit of $\chi_1$ in the dual $\widehat N$ contains all the $\chi_i$'s. 
Since moreover $\restr{\sigma}{N}$ is faithful, it follows that the dual $\widehat N$ is generated by a single $G$-orbit, that of $\chi_1$. 
By duality (see \cite[Lemma~14]{BH08}), we infer that $N$ is generated by a single conjugacy class. 
This proves \ref{item:Gaschutz1} $\implies$ \ref{item:Gaschutz3}. 

For the converse, suppose that $N = \SocA(G)$ is generated by a single conjugacy class. 
Hence $\widehat N$ is generated by a single $G$-orbit, by duality. 
Let $\chi$ be a representative of that orbit, and view $\chi$ as the one-dimensional $\bar{\Fi}$-representation of $N$ given by sending $n \in N$ to the multiplication by $\chi(n)$. 
Let also $\pi$ be a faithful irreducible $\bar{\Fi}$-representation of the direct complement $\SocH(G)$ of $\SocA(G)$ in $\Soc(G)$, product of the non-abelian minimal normal subgroups of $G$. 
The representation $\pi$ exists because $\SocH(G)$ is a direct product of non-abelian finite simple groups. 
Let now $\sigma$ be any irreducible component of the induced representation $\ind_{\Soc(G)}^G(\chi \otimes \pi)$. 
It follows from Frobenius reciprocity that $\restr{\sigma}{\Soc(G)}$ is faithful, since it contains the irreducible representation $\chi^g \otimes \pi^g$ for all $g \in G$. 
Hence the kernel of $\sigma$ does not contain any minimal normal subgroup of $G$. 
Therefore $\sigma$ is a faithful $\bar{\Fi}$-representation of $G$, and any irreducible $\Fi$-representation whose extension to $\bar{\Fi}$ contains $\sigma$ is then a faithful irreducible $\Fi$-representation of $G$. 

The equivalence $\ref{item:Gaschutz2} \iff \ref{item:Gaschutz3}$ is a straightforward consequence of the product structure of the socle: $\Soc(G) = \SocA(G) \times \SocH(G)$. 
If the conjugacy class of $g$ generates $\Soc(G)$ then its projection to $\SocA(G)$ generates $\SocA(G)$. 
Conversely, if the conjugacy class of $g_0$ generates $\SocA(G)$, it suffices to write $\SocH(G)$ as a product of non-abelian simple groups $S_1, \dots, S_l$, and pick $g_i \in S_i \setminus \{\ee\}$ to see that the conjugacy class of $g_0 g_1 \cdots g_l$ generates $\Soc(G)$. 

Lastly, $\ref{item:Gaschutz3}$ implies $\ref{item:Gaschutz4}$ for a similar reason: by the complete reducibility of the socle, for every normal subgroup $A$ of $G$ contained in $\SocA(G)$, there is a normal subgroup $A'$ of $G$ for which $\SocA(G) = A \times A'$.
If the conjugacy class of $g_0$ generates $\SocA(G)$, its projection onto $A$ generates $A$. 
\end{proof}

\section{Irreducible representations that are center-preserving on a subgroup}

\subsection{Towards the proof of \texorpdfstring{\Cref{thm:faithfulcenterpreservingirrep}}{Theorem \ref{thm:faithfulcenterpreservingirrep}}} \label{subsec:proof}

We shall need the following consequence of Goursat's lemma. 

\begin{lem} \label{lem:Goursat}
Let $H \leq G$. 
Let also $M$, $N$ be normal subgroups of $G$ with $M \cap H = N \cap H = M \cap N = \{\ee\}$, and suppose that $G = HN$. 
Then there is an $H$-equivariant injective homomorphism $M \to N$. 
In particular, if $|M| = |N|$, then $M \cong N$. 
\end{lem}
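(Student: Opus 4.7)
The plan is to exploit the internal semidirect decomposition provided by $G = HN$ together with $H \cap N = \{\ee\}$: every $g \in G$ has a unique expression $g = h(g)\, n(g)$ with $h(g) \in H$ and $n(g) \in N$. The first observation I would record is that $M$ and $N$ commute element-wise: since both are normal in $G$, the commutator subgroup satisfies $[M, N] \leq M \cap N = \{\ee\}$. This centralization is what drives the rest of the argument.

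My candidate for the desired map is $\phi \colon M \to N$ defined by $\phi(m) = n(m)^{-1}$, where the inversion is there to convert an anti-multiplicativity into multiplicativity. To verify that $\phi$ is a homomorphism, I would expand $m_1 m_2 = h(m_1) n(m_1) h(m_2) n(m_2)$ by conjugating $n(m_1)$ past $h(m_2)$. Writing $h(m_2) = m_2 n(m_2)^{-1}$ and using that $m_2$ centralizes $N$, one obtains $h(m_2)^{-1} n(m_1) h(m_2) = n(m_2) n(m_1) n(m_2)^{-1}$, so that the whole product collapses to $h(m_1 m_2) = h(m_1) h(m_2)$ and $n(m_1 m_2) = n(m_2) n(m_1)$. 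The anti-multiplicativity of $n(\cdot)$ is precisely what makes $\phi$ multiplicative. Injectivity is then immediate: if $n(m) = \ee$ then $m = h(m) \in M \cap H = \{\ee\}$. For $H$-equivariance, any $h \in H$ conjugates $H$ to itself and $N$ to itself (the latter by normality in $G$), so the uniqueness of the decomposition gives $n(hmh^{-1}) = h n(m) h^{-1}$, whence $\phi(hmh^{-1}) = h\phi(m) h^{-1}$. The closing clause $|M| = |N| \Rightarrow M \cong N$ is then just the pigeonhole principle applied to a finite injection.

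The only nontrivial step is the identity $n(m_1 m_2) = n(m_2) n(m_1)$, which relies crucially on $[M, N] = \{\ee\}$; I do not expect to need Goursat's lemma in its full generality, only this centralization stemming from double normality and trivial intersection. The main obstacle, if any, is simply bookkeeping with the decomposition — in particular remembering to invert at the right moment so that the anti-homomorphism $m \mapsto n(m)$ becomes an honest homomorphism.
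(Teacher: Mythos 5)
Your proof is correct, and it takes a more elementary route than the paper. The paper's argument projects $M$ along $G \to G/N \cong H$ to obtain a subgroup $M' \leq H$, observes that $M' \leq MN \cong M \times N$ (the direct-product structure coming from the same commutation fact $[M,N] \leq M \cap N = \{\ee\}$ that you isolate), projects $M'$ onto the $N$-factor to get $N'$, and then invokes Goursat's lemma to conclude that $M'$ is the graph of an isomorphism $M \to N'$. Unwinding that construction shows it produces exactly your map: since $h(m) = m\, n(m)^{-1}$ and $m$ commutes with $n(m)$, the element of $M'$ corresponding to $m$ has $M \times N$-coordinates $(m, n(m)^{-1})$, so the graph isomorphism is $m \mapsto n(m)^{-1} = \phi(m)$. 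Thus the two proofs build the same homomorphism and differ only in how multiplicativity is certified: the paper uses Goursat as a black box (which silently handles the inversion issue, since a graph of an isomorphism is automatically a subgroup), whereas you verify it by hand via the unique decomposition $g = h(g)n(g)$, discovering the anti-multiplicativity $n(m_1 m_2) = n(m_2) n(m_1)$ and correcting it by inverting. Your version is self-contained and makes the role of $[M,N]=\{\ee\}$ completely explicit, at the price of the bookkeeping computation; the paper's is shorter on the page and displays the image subgroup $N'$ structurally. Your injectivity, equivariance, and final pigeonhole steps coincide in substance with the paper's.
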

\begin{proof}
Let $M' \leq H$ be the image of $M$ under the canonical projection $G \to  G/N \cong H$. 
By construction, $M \leq M' \ltimes N$ and $M' \leq MN$. 
Because $M \cap N = \{\ee\}$, we have $M N \cong M \times N$. 
Let $N' \leq N$ be the image of $M'$ under the canonical projection $MN \to N$. 
Thus $M'$ is a subgroup of the direct product $MN' \cong  M \times N'$. Since $H \cap M =\{\ee\} = H \cap N$ by assumption, we have $M' \cap M = \{\ee\} = M' \cap N'$. 
It then follows from Goursat's lemma that $M'$ is the graph of an isomorphism $M \to N'$. 
The $H$-equivariance follows from the fact that $M$, $M'$ and $N$ are all normalized by $H$. 
\end{proof}

Recall that minimal normal subgroups of a finite group are characteristically simple, hence split as direct products of pairwise isomorphic simple groups. 
We shall also need the following basic fact concerning minimal normal subgroups. 

\begin{lem} \label{lem:minimal-normal}
Let $G$ be a finite group, let $\mathscr M_\rA$ (resp. $\mathscr M_\rS$) be a set of abelian (resp. non-abelian) minimal normal subgroups of $G$. 
Set $W_\rA = \langle M \mid M \in \mathscr M_\rA \rangle$ and $W_\rS = \langle M \mid M \in \mathscr M_\rS \rangle $. 
Then $W_\rA W_\rS \cong W_\rA \times W_\rS$, and $W_\rS$ splits as the direct product of the elements of $\mathscr M_\rS$. 
In particular, the only minimal normal subgroups of $G$ contained in $W_\rS$ are the elements of $\mathscr M_\rS$. 
\end{lem}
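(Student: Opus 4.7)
The plan is to build up to the direct product decomposition from the basic fact that distinct minimal normal subgroups of $G$ commute elementwise. From there, both the $W_\rA \times W_\rS$ splitting and the last assertion will follow almost automatically by a centralizer argument, once one knows that $W_\rS$ has trivial center.

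\smallskip

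\textbf{Step 1: Commutation of distinct minimal normal subgroups.} I would first record the standard fact that if $M$ and $M'$ are two distinct minimal normal subgroups of $G$, then $M \cap M'$ is a normal subgroup of $G$ properly contained in the minimal $M$, hence trivial; consequently $[M,M'] \leq M \cap M' = \{\ee\}$, so $M$ and $M'$ commute elementwise. In particular, the subgroup generated by any family of minimal normal subgroups coincides with their setwise product.

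\smallskip

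\textbf{Step 2: Internal direct product structure of $W_\rS$.} Writing $\mathscr M_\rS = \{M_1, \dots, M_k\}$, I would prove by induction on $i$ that $M_1 \cdots M_i$ is an internal direct product. The inductive step amounts to showing $M_i \cap (M_1 \cdots M_{i-1}) = \{\ee\}$. This intersection is normal in $G$ and contained in the minimal normal $M_i$, so it is either trivial or equal to $M_i$. In the latter case, any $m \in M_i$ would lie in $M_1 \times \cdots \times M_{i-1}$ (by induction hypothesis) and simultaneously commute with each $M_j$, $j<i$, by Step 1, hence $m$ would lie in $\rZ(M_1 \times \cdots \times M_{i-1}) = \rZ(M_1) \times \cdots \times \rZ(M_{i-1})$. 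Since each $M_j \in \mathscr M_\rS$ is a direct product of non-abelian simple groups, it is centerless, so $m = \ee$, contradicting $M_i \neq \{\ee\}$. This forces the intersection to be trivial, establishing the direct product decomposition. In particular $\rZ(W_\rS) = \{\ee\}$.

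\smallskip

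\textbf{Step 3: The splitting $W_\rA W_\rS \cong W_\rA \times W_\rS$.} By Step 1 every element of $W_\rA$ commutes with every element of $W_\rS$ (generators of the two subgroups are pairwise distinct minimal normal subgroups). In particular $W_\rA \cap W_\rS$ is central in $W_\rS$, and thus trivial by Step 2. Hence the product $W_\rA W_\rS$ is an internal direct product.

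\smallskip

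\textbf{Step 4: Minimal normal subgroups contained in $W_\rS$.} Let $N$ be a minimal normal subgroup of $G$ with $N \leq W_\rS$. If $N \notin \mathscr M_\rS$, then Step 1 applied to $N$ and each $M \in \mathscr M_\rS$ shows that $N$ centralizes every $M$, hence centralizes $W_\rS$. Then $N \leq \rZ(W_\rS) = \{\ee\}$ by Step 2, a contradiction.

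\smallskip

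None of the steps is really an obstacle; the one deserving the most care is the inductive argument in Step 2, where it is essential to combine the normality-based dichotomy (minimality forces intersections to be trivial or total) with the centerlessness of non-abelian minimal normal subgroups. The rest is a straightforward commutator bookkeeping.
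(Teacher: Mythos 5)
Your proof is correct. Note, however, that the paper does not give an argument for this lemma at all: its entire proof is the citation ``This follows from \cite[Proposition~1]{BH08}.'' Your write-up is therefore a genuinely self-contained alternative, and the route you take --- distinct minimal normal subgroups intersect trivially and hence commute elementwise; non-abelian minimal normal subgroups are centerless, being direct products of non-abelian simple groups (a fact the paper recalls immediately before the lemma); and consequently each intersection $M_i \cap (M_1 \cdots M_{i-1})$ arising in the inductive construction is forced to be trivial --- is the standard mechanism that the cited proposition encapsulates. Two small points deserve polish but do not affect correctness: in Step 1, the containment $M \cap M' \lneq M$ should be justified by observing that $M \cap M' = M$ would give $M \leq M'$ and hence $M = M'$ by minimality of $M'$; and your induction tacitly uses that a product of normal subgroups is again a subgroup, so that $\langle M_1, \dots, M_{i-1} \rangle = M_1 \cdots M_{i-1}$, which is worth stating once (you do flag it at the end of Step 1). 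In short, your version buys self-containedness and makes visible exactly where non-abelianness is used (centerlessness kills the intersections in Steps 2--4), while the paper's one-line citation buys brevity at the cost of sending the reader to an external reference.
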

\begin{proof}
This follows from \cite[Proposition~1]{BH08}. 
\end{proof}

It will be convenient to view abelian normal subgroups of $G$ as $\Z[G]$-modules. 
Each abelian minimal normal subgroup is a simple $\Z[G]$-module. 
Thus their join $\SocA(G)$ is a semisimple $\Z[G]$-module, and every submodule of $\SocA(G)$ is a direct summand (see \cite[Proposition~2.14]{CH20}).
It is also useful to keep in mind that a normal subgroup $V$ of $G$ contained in $\SocA(G)$ is generated by a single conjugacy class if and only if it is cyclic as a $\Z[G]$-module (see \cite[Proposition~2.15]{CH20}). 

\begin{lem} \label{lem:semidirect}
Let $H \leq G$ be finite groups, and suppose that $H$ has a faithful irreducible representation $\rho$. 
Let also $N$ be a non-abelian minimal normal subgroup of $G$, and suppose that $N \cap H =\{\ee\}$ and $G = HN$. 
Then $G$ has a faithful irreducible representation $\sigma$ whose restriction to $H$ contains $\rho$. 
\end{lem}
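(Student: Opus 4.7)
Since $G = HN$ with $H \cap N = \{\ee\}$, the group $G$ is the semidirect product $H \ltimes N$. My plan is to construct $\sigma$ as a common irreducible constituent of $\ind_H^G(\rho)$ and $\ind_N^G(\pi)$, where $\pi$ is a faithful irreducible representation of $N$, and then verify its faithfulness.

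First I would note that, since $N$ is a non-abelian minimal normal subgroup of $G$, it decomposes as a direct product of pairwise isomorphic non-abelian finite simple groups, and therefore admits a faithful irreducible representation $\pi$ over $\Fi$. Because $G = HN$ and $H \cap N = \{\ee\}$, the double coset space $H \backslash G / N$ is a singleton and $H \cap gNg^{-1} = \{\ee\}$ for every $g$; Mackey's formula then yields $\res_H^G \ind_N^G(\pi) \cong (\dim \pi) \cdot \Fi[H]$, the regular representation of $H$ with multiplicity $\dim\pi$. By Frobenius reciprocity, $\langle \ind_H^G(\rho), \ind_N^G(\pi)\rangle_G = (\dim \pi)(\dim \rho) > 0$, so $\ind_H^G(\rho)$ and $\ind_N^G(\pi)$ share an irreducible constituent $\sigma$. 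Applying Frobenius reciprocity once more, $\sigma|_H$ contains $\rho$ and $\sigma|_N$ contains $\pi$; hence $\ker\sigma \cap H = \{\ee\}$ and $\ker\sigma \cap N = \{\ee\}$, so $\sigma|_H$ is faithful (and contains $\rho$) and $\sigma|_N$ is faithful.

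The main obstacle is to establish that $\ker\sigma = \{\ee\}$. Suppose for contradiction that some minimal normal subgroup $M$ of $G$ is contained in $\ker\sigma$. Then $M \cap H = \{\ee\}$, and by the minimality of $N$ together with $\ker\sigma \cap N = \{\ee\}$, also $M \cap N = \{\ee\}$. \Cref{lem:Goursat} then yields an $H$-equivariant injective homomorphism $\phi : M \hookrightarrow N$ with image $N' := \phi(M) \leq N$. Writing each $m \in M$ uniquely as $m = h(m) \phi(m)^{-1}$ with $h(m) \in H$ and $\phi(m) \in N$, the relation $\sigma(m) = I$ forces $\sigma(h(m)) = \sigma(\phi(m))$, so that the image under $\sigma$ of the subgroup $M' := \{h(m) : m \in M\} \leq H$ coincides with the image of $N'$. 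I expect that combining this rigid coincidence with the faithfulness of $\sigma$ on both $H$ and $N$, together with the minimality of $N$ as a normal subgroup of $G$, ultimately forces $M = \{\ee\}$. Should this direct argument prove insufficient, a robust fallback is to refine the choice of $\sigma$ via an inclusion-exclusion count over the finite collection of minimal normal subgroups of $G$ meeting $H$ trivially: the pairwise trivial intersection and mutual centralization of these minimal normals allow one to compute the total dimension of the faithful irreducible constituents of $\ind_H^G(\rho)$ as a Möbius-type alternating sum, which is strictly positive — thereby guaranteeing the existence of a faithful constituent $\sigma$ with $\sigma|_H \supseteq \rho$.
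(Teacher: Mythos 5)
Your opening moves are fine: $N$ is a product of non-abelian simple groups, so a faithful irreducible $\pi$ exists, and the Mackey--Frobenius computation does produce a common irreducible constituent $\sigma$ of $\ind_H^G(\rho)$ and $\ind_N^G(\pi)$ which is faithful on $H$ (containing $\rho$) and faithful on $N$. The gap is the last step, and it is fatal: the contradiction you ``expect'' to derive from $M \leq \ker(\sigma)$ does not exist, because such a non-faithful common constituent can genuinely occur. Concretely, let $S$ be a non-abelian finite simple group, work over $\C$, and set
\[
G = C_2 \times S \times S, \qquad H = C_2 \times \Delta, \qquad N = \{\ee\}\times\{\ee\}\times S,
\]
where $\Delta = \{(s,s)\} \leq S \times S$. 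Then $G = HN$, $H \cap N = \{\ee\}$, and $N$ is a non-abelian minimal normal subgroup. Take $\rho = \chi_0 \otimes \alpha_0$ on $H \cong C_2 \times S$, with $\chi_0$ the non-trivial character of $C_2$ and $\alpha_0$ any non-trivial (hence faithful) irreducible representation of $S$; this $\rho$ is faithful irreducible. Now choose $\pi$ to be the copy of $\alpha_0$ on $N \cong S$, and consider $\sigma = \chi_0 \otimes \mathbf{1} \otimes \alpha_0$. One checks $\restr{\sigma}{H} = \rho$ and $\restr{\sigma}{N} = \pi$, so $\sigma$ is a common irreducible constituent of $\ind_H^G(\rho)$ and $\ind_N^G(\pi)$ enjoying every property your construction guarantees --- yet $\ker(\sigma) = \{\ee\}\times S \times\{\ee\} =: M$ is a whole minimal normal subgroup. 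Your ``rigid coincidence'' does hold here ($M'$ is the diagonal $\{\ee\}\times\Delta$, $N' = N$, and $\sigma(M') = \sigma(N') = \alpha_0(S)$), but it is perfectly consistent; nothing forces $M = \{\ee\}$. Note also that your argument fixes an arbitrary faithful $\pi$ at the outset and gives no mechanism for choosing it well, whereas in this example the failure occurs precisely for $\pi \cong \alpha_0$.

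This configuration --- a second minimal normal subgroup that is an $H$-equivariant copy of $N$ meeting both $H$ and $N$ trivially --- is exactly the hard case of the paper's proof (its second case, reached after a reduction showing all other troublesome minimal normal subgroups can be split off). There, faithfulness cannot be achieved by constraints on restrictions to $H$ and $N$ alone; instead the paper shows that the inflations of $\rho$ through $G/M \cong H$ and $G/N \cong H$ are the \emph{unique} constituents of $\ind_H^G(\rho)$ trivial on $M$, resp.\ on $N$, and then uses the dimension count $\dim \ind_H^G(\rho) = |N|\dim\rho > 2\dim\rho$ to exhibit a third constituent, which is then necessarily faithful. (In the example above, your $\sigma$ is precisely one of these two bad inflations.) Your fallback ``M\"obius-type alternating sum'' gestures at such a count, but it is unsubstantiated as stated: without first controlling \emph{which} minimal normal subgroups meet $H$ trivially --- the paper's claim $W \cap N = \{\ee\}$, the complement $W'$ to $H \cap W_\rA$, the case distinction on whether $HW = G$, and the application of \Cref{lem:Goursat} showing there are then only \emph{two} such subgroups, each of order $|N|$ --- one has no bound on the number and sizes of the bad subgroups, nor on the multiplicities of the corresponding trivial-on-$M$ parts of $\ind_H^G(\rho)$, so positivity of your alternating sum cannot be asserted. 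Repairing your proof essentially amounts to redoing the paper's structural analysis.
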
   

Observe that \Cref{lem:semidirect} is wrong without the hypothesis that the minimal normal subgroup $N$ is non-abelian: $G = C_p \times C_p$ and $H \cong N \cong C_p$ provides a counterexample. 

On the other hand, if $N$ is non-abelian, then $\SocA(G) \cap N = \{\ee\}$, so that $\SocA(G) $ maps injectively to a subgroup of $\SocA(H)$ under the canonical projection $G \to G/N \cong H$. 
Since $H$ has a faithful irreducible representation by hypothesis, it follows from Gasch\"utz' \Cref{thm:Gaschutz} that $\SocA(H)$ is a cyclic $\Z[H]$-module, hence $\SocA(G)$ is a cyclic module as well, so that a second application of \Cref{thm:Gaschutz} ensures that $G$ has a faithful irreducible representation. 
Establishing that a faithful irreducible representation of $G$ may be found among the irreducible constituents of $\ind_H^G(\rho)$ requires a more detailed analysis. 

\begin{proof}[Proof of \Cref{lem:semidirect}]
Let $\mathscr M$ be the set of all minimal normal subgroups $M$ of $G$ with $M \cap H =\{\ee\}$, let $\mathscr M_\rA \subseteq \mathscr M$ be the subset consisting of the abelian ones, let $\mathscr M_\rS = \mathscr M \setminus \mathscr M_\rA$, and let $\mathscr M_\rS^0 = \{M \in \mathscr M_\rS \mid M \neq N\}$. 
Let also $\mathscr M'$ be the set of all minimal normal subgroups $M$ of $G$ with $M \cap H \neq \{\ee\}$. 

We set 
\[
W_\rA = \langle M \mid M \in \mathscr M_\rA \rangle, \qquad W_\rS = \langle M \mid M \in \mathscr M_\rS^0 \rangle, \quad \text{and} \quad W = W_\rA W_\rS, 
\]
so that $W_\rA$, $W_\rS$ and $W$ are normal subgroups of $G$.

First, we claim that $W \cap N = \{\ee\}$. 
Indeed, suppose the contrary; then $N \leq W$ by the minimality of $N$. 
The group $W$ splits as a direct product $W_\rA \times W_\rS$ (see \Cref{lem:minimal-normal}), and the image of $N$ under the canonical projection $W \to W_\rA$ is trivial since $W_\rA$ is abelian and $N$ is not. 
Thus $N \leq W_\rS$, which is impossible since the only minimal normal subgroups of $G$ contained in $W_\rS$ are the elements of $\mathscr M_\rS^0$ by \Cref{lem:minimal-normal}.

\medskip
Next, we set $H' = HW$ and distinguish two cases. 

\begin{case}
    $H'\neq G$.
\end{case}     

Let $\pi \colon G \to G/N \cong H$ be the canonical projection, and let $\rho'$ denote the irreducible representation of $H'$ obtained by restriction of $\rho \circ \pi$. 
The restriction $\restr{\pi}{H}$ is the identity on $H$, hence we have $\restr{\rho'}{H} = \rho$. 
Moreover, $\rho'$ is injective on $W$ by the claim. 
In particular $\restr{\rho'}{M}$ is non-trivial on every $M \in \mathscr M \setminus \{N\}$.

Recall that the kernel of the induced representation $\ind_{H'}^G(\rho')$ is the subgroup $\core_G(\ker(\rho'))$ of $H'$. 
Since by assumption $H' \neq G$, we have $N \not \leq H'$. 
Therefore, there must be an irreducible constituent $\sigma$ of $\ind_{H'}^G(\rho')$ which is non-trivial on $N$. 
Observe that $\restr{\sigma}{H'}$ contains $\rho'$ by Frobenius reciprocity. 
We deduce that $\sigma$ is non-trivial on every $M \in \mathscr M$. 
Since $\restr{\rho'}{H} = \rho$ is injective by hypothesis, it  follows that $\sigma$ is also non-trivial on every minimal normal subgroup $M \in \mathscr M'$. 
Altogether, $\sigma$ is non-trivial on every minimal normal subgroup of $G$. 
Thus $\sigma$ is faithful, concluding the proof in this case. 

\begin{case}
     $H' = G$. 
\end{case}

Since $W \cap N$ is trivial by the claim above, the group $N$ acts trivially on $W$, hence on $W_\rA$. 
As $G/N \cong H$, it follows that the abelian group $W_\rA$, which is a semisimple $\Z[G]$-module, is also a semisimple $\Z[H]$-module. 
The intersection $H \cap W_\rA$ is a submodule of $W_\rA$, hence we may pick a complement $W'_\rA$, so that $W_\rA \cong W'_\rA \times (H \cap W_\rA)$. 
We set $W' = W'_\rA W_\rS$. 
Thus we have 
\[
G = H' = HW = HW'.
\]

We claim that $H \cap W' = \{\ee\}$. 
Indeed, $H \cap W'$ is a normal subgroup of $H$ which commutes with $N$ (since $W \cap N = \{\ee\}$ by the previous claim). 
In consequence, $H \cap W'$ is normal in $G$. 
Let $M$ be a minimal normal subgroup of $G$ contained in $H \cap W'$. 
If $M$ is abelian, then $M  \leq H \cap W'_\rA$, which is impossible by the definition of $W'_\rA$. 
If $M$ is non-abelian, then the image of $M$ under the canonical projection $W'\cong W'_\rA \times W_\rS \to W'_\rA$ is trivial, so that $M \leq W_\rS$. 
This is also impossible: by \Cref{lem:minimal-normal} the only minimal normal subgroups of $G$ contained in $W_\rS$ are the elements of $\mathscr M_\rS^0$, which all have trivial intersection with $H$, while $M \leq H$ by definition. 
Thus $H \cap W'$ contains no minimal normal subgroups, hence is trivial. 

We have seen that $G = HW' = HN$ and $N \cap H = H \cap W' = W' \cap N = \{\ee\}$. 
We may thus invoke \Cref{lem:Goursat}, which provides an $H$-equivariant isomorphism $W' \to N$. 
It follows that $W'$ is a minimal normal subgroup of $G$, and that $\mathscr M= \{W', N\}$. 
Let $\sigma'$ and $\sigma''$ be the irreducible representations of $G$ obtained by composing the respective canonical projections $G \to G/W' \cong H$ and $G \to G/N \cong H$ with $\rho$. 
Note that by Frobenius reciprocity, both $\sigma'$ and $\sigma''$ are constituents of $\ind_H^G(\rho)$. 
Nonetheless, since $\dim(\ind_H^G(\rho)) = |N| \cdot \dim(\rho) > 2 \dim(\rho)$, the representation $\ind_H^G(\rho)$ must have an irreducible component $\sigma$ inequivalent to $\sigma'$ and $\sigma''$. 

It turns out that $\sigma'$ (resp.\ $\sigma''$) is the only irreducible constituent of $\ind_H^G(\rho)$ that is trivial on $W'$ (resp.\ $N$). 
Indeed, let $\tau$ be an irreducible constituent of $\ind_H^G(\rho)$ that is trivial on $W'$. 
Since $H$ maps (isomorphically) onto $G/W'$, we see that $\tau$ and its restriction $\restr{\tau}{H}$ have the same image; in particular, $\restr{\tau}{H}$ is irreducible. 
By Frobenius reciprocity, $\restr{\tau}{H}$ contains $\rho$, hence $\restr{\tau}{H} = \rho$. 
Therefore $\tau$ is the composite of the projection $G \to G/W' \cong H$ with $\rho$, in other words, $\tau = \sigma'$. 
The same argument yields the same conclusion for $\sigma''$ and $N$. 

It follows from the above that $\sigma$ is non-trivial on every element of $\mathscr M$. 
Since $\restr{\sigma}{H}$ contains $\rho$, which is faithful on $H$, we also know that $\sigma$ is non-trivial on every element of $\mathscr M'$. 
Altogether, $\sigma$ is non-trivial on every minimal normal subgroup of $G$, which ensures again that $\sigma$ is faithful. 
\end{proof}

Given a $\Z[G]$-module $A$ and submodules $C \leq B \leq A$, we say that the quotient $\Z[G]$-module $B/C$ is a \emphb{subquotient} of $A$ (sometimes also called a section of $A$). 

\begin{prop} \label{prop:inducefaithfultofaithful}
Let $H \leq G$ be finite groups, and suppose that $H$ has a faithful irreducible representation $\rho$. 
Let also $\mathscr M_\rA $ be the set of all abelian minimal normal subgroups $M$ of $G$ with $M \cap H =\{\ee\}$. 
Assume that for each non-empty subset $\mathscr W \subseteq \mathscr M_\rA$, there exists $V \in \mathscr W$ such that the $\Z[H]$-module $V$ is not isomorphic to any subquotient of the $\Z[H]$-module $\langle W \mid W \in \mathscr W \setminus \{V\} \rangle$. 
Then $G$ has a faithful irreducible representation $\sigma$ whose restriction to $H$ contains $\rho$. 
\end{prop}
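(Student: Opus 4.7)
The plan is to exhibit a faithful irreducible constituent $\sigma$ of $\ind_H^G(\rho)$. By Frobenius reciprocity, every irreducible constituent $\sigma$ of $\ind_H^G(\rho)$ contains $\rho$ in its restriction to $H$, so $\restr{\sigma}{H}$ is faithful and $\ker(\sigma)\cap H=\{\ee\}$; consequently $\sigma$ is automatically non-trivial on every minimal normal subgroup of $G$ meeting $H$ non-trivially. The remaining obstructions come from the minimal normal subgroups of $G$ disjoint from $H$, which split into the abelian family $\mathscr M_\rA$ and its non-abelian complement, which I will denote $\mathscr M_\rS$.

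The first step is to dispose of $\mathscr M_\rS$ using \Cref{lem:semidirect}. For $N\in\mathscr M_\rS$, the intermediate subgroup $HN\leq G$ satisfies the hypotheses of \Cref{lem:semidirect} with $HN$ in place of $G$, yielding a faithful irreducible representation of $HN$ extending $\rho$. Iterating this maneuver (through an induction on $|\mathscr M_\rS|$, replacing $H$ by $HN$ and $\rho$ by its extension at each step) and checking that the combinatorial hypothesis on $\mathscr M_\rA$ persists or can be re-established, should reduce the problem to the case $\mathscr M_\rS=\emptyset$. This reduction will require some care, since enlarging $H$ modifies $\mathscr M_\rA$: one must verify that the section hypothesis is preserved in the new setup, which is plausible because the hypothesis is inherited by every non-empty subset of $\mathscr M_\rA$.

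With $\mathscr M_\rS=\emptyset$, the remaining task is to find a constituent of $\ind_H^G(\rho)$ non-trivial on every $V\in\mathscr M_\rA$ simultaneously. My plan is to argue by contradiction. Let $V\in\mathscr M_\rA$ and $W=\langle M\mid M\in\mathscr M_\rA\setminus\{V\}\rangle$ be chosen as in the hypothesis, so that $V$ is not isomorphic to any $\Z[H]$-section of $W$. Assume that every irreducible constituent of $\ind_H^G(\rho)$ non-trivial on $V$ is trivial on some $M\in\mathscr M_\rA\setminus\{V\}$. A Clifford-theoretic decomposition of $\restr{\ind_H^G(\rho)}{L}$ for $L=VW\leq\SocA(G)$, combined with Mackey's formula and the Pontryagin duality between $L$ and $\widehat L$ (as used in the proof of \Cref{thm:Gaschutz}), should translate this triviality pattern into an $H$-equivariant identification of $V$ with a section of $W$, contradicting the choice of $V$. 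The main obstacle is precisely this translation: turning the combinatorial hypothesis on $\Z[H]$-sections into an actual representation-theoretic contradiction will require careful bookkeeping of how the $G$-orbits of characters of $L$ decompose under the $H$-action, and of how the constituents of $\ind_H^G(\rho)$ split across these orbits — most delicately when other $V_i\in\mathscr M_\rA\setminus\{V\}$ share characters with $V$ under the $G$-action.
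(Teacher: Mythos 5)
Your reduction of the non-abelian part is essentially the paper's argument and is sound in spirit: the paper likewise enlarges $H$ to $H' = HN$ using \Cref{lem:semidirect} (though framed as an induction on $|G|-|H|$ rather than on $|\mathscr M_\rS|$), and your observation that the section hypothesis persists is correct, since any $\Z[H']$-section is in particular a $\Z[H]$-section. One small repair is needed: for $N \in \mathscr M_\rS$, the group $N$ is minimal normal in $G$ but need not be minimal normal in $HN$ (the factors of $N$ are permuted transitively by $G$, not necessarily by $H$), so \Cref{lem:semidirect} does not apply verbatim; one must first replace $N$ by a minimal normal subgroup of $HN$ contained in $N$, which is automatically non-abelian. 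This is exactly what the paper does.

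The abelian case, however, contains a genuine gap, and not merely the bookkeeping difficulty you acknowledge: the contradiction you aim for is unobtainable from your assumptions, because your scheme invokes the hypothesis only for the full set $\mathscr W = \mathscr M_\rA$. Concretely, the implication ``every irreducible constituent of $\ind_H^G(\rho)$ non-trivial on $V$ is trivial on some $M \in \mathscr M_\rA \setminus \{V\}$, hence $V$ is a $\Z[H]$-section of $W$'' is false. Take $H = \{\ee\}$ with $\rho$ trivial, and $G = C_p \times C_p \times C_q$ for distinct primes $p, q$ prime to the characteristic. Then $\mathscr M_\rA$ consists of $C_q$ and the $p+1$ subgroups of order $p$; with $V = C_q$ and $W = C_p \times C_p$, the module $V$ is not a section of $W$, yet every irreducible constituent of $\ind_H^G(\rho)$ (the regular representation) has a kernel containing some order-$p$ subgroup, so the triviality pattern you assume does hold and no contradiction can follow. (The proposition itself is not contradicted: its hypothesis fails for subsets consisting of two distinct order-$p$ subgroups, each being a section of the other --- but your argument never uses those instances.) The subset hypothesis in full strength is thus indispensable, and the paper uses it quite differently: it orders $\mathscr M_\rA = \{M_1, \dots, M_k\}$ so that each $M_i$ is not a section of $W_i = \langle M_j \mid j < i \rangle$, converts ``not a section'' into the group-theoretic non-containment $M_i \not\leq H W_i$ by a Goursat-type argument (cf.\ \Cref{lem:Goursat}), and then climbs the chain $H_i = H \langle M_1, \dots, M_i \rangle$, at each step inducing the previously built representation from $H_{i-1}$ to $H_i$ and choosing a constituent non-trivial on $M_i$; Frobenius reciprocity guarantees that everything accumulated so far (including $\rho$) is retained. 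This stepwise construction replaces the Clifford--Mackey--duality analysis you propose, which in the form stated cannot be completed.
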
   
\begin{proof}
We proceed by induction on $d = |G|-|H|$. 
In the base case $d=0$, we have $G=H$ and the required assertion holds by taking $\sigma = \rho$. 

We assume henceforth that $d>0$. 
Let $\mathscr M$ be the set of all minimal normal subgroups $M$ of $G$ with $M \cap H =\{\ee\}$. 
Hence $\mathscr M_\rA \subseteq \mathscr M$. 

\medskip
We first consider the case where $\mathscr M_\rA \neq \mathscr M$, that is, where $G$ contains a non-abelian minimal normal subgroup $M$ intersecting $H$ trivially. 
Let $N \leq M$ be a minimal normal subgroup of $HM \cong H \ltimes M$. 
Note that $M$ is a direct product of copies of a non-abelian finite simple group $S$. 
Since $N$ is normal in $M$, it is a (non-trivial) product of some of these copies of $S$; in particular $N$ is non-abelian. 
We set $H' = H N \cong H \ltimes N$. 
By \Cref{lem:semidirect}, the group $H'$ has a faithful irreducible representation $\rho'$ whose restriction to $H$ contains $\rho$. 

As any subquotient of a $\Z[H']$-module $A$ is also a subquotient of $A$ viewed as $\Z[H]$-module, the induction hypothesis is satisfied by the subgroup $H' \leq G$ and its faithful irreducible representation $\rho'$, proving the required conclusion in this case.

\medskip
It remains to treat the case where $\mathscr M_\rA = \mathscr M$. 
Let $k =|\mathscr M_\rA |$. 
By the hypothesis made on $\mathscr M_\rA$, we may enumerate the elements of $\mathscr M_\rA$, say $M_1, \dots, M_k$, in such a way that for $i = 2, \dots, k$, the normal subgroup $M_{i}$ (viewed as a $\Z[H]$-module) is not isomorphic to a subquotient of the $\Z[H]$-module $W_i = \langle M_j \mid 1 \leq j < i\rangle$. 

We set $H_0 = H$, and for $i=1, \dots, k$, we define 
\[
H_i = H  \langle M_j \mid 1\leq j \leq i\rangle.
\]

\begin{claim} \label{claim:1}
    For each $i \in \{1, \dots, k\}$, we have $\langle M_j \mid 1\leq j \leq i\rangle \cong M_1 \times \dots \times M_i$.
\end{claim}
We prove the claim by induction on $i$, the result being trivial for $i=1$. 

Let $i> 1$. 
By the choice of the ordering, the normal subgroup $M_i$ is not isomorphic to a $\Z[H]$-submodule of $W_i = \langle M_j \mid 1 \leq j < i\rangle$. 
In particular, $M_i$ is not contained in $W_i$, hence $M_i \cap W_i = \{\ee\}$ since $M_i$ is a minimal normal subgroup of $G$. 
Thus $M_i W_i \cong M_i \times W_i$, and we have $W_i \cong M_1 \times \dots \times M_{i-1}$ by induction. 
This proves \Cref{claim:1}. 

\begin{claim} \label{claim:2}
    For each $i \in \{1, \dots, k\}$, we have $M_{i} \not \leq H_{i-1}$. 
\end{claim}
Suppose for the sake of contradiction that $M_i \leq H_{i-1} = HW_i$. 

Let $\pi \colon H_{i-1} \to H_{i-1}/W_i \cong H/H \cap W_{i-1}$ denote the canonical projection. Let $M'_i \leq H$ be the preimage of $\pi(M_i)$  in $H$. 
Thus $M_i \leq M'_i W_i $, from which we deduce that $M'_i \leq M_i W_i \cong M_i \times W_i$, in view of \Cref{claim:1}. 
By construction, the canonical projection from $M_i'$ to the first factor $M_i$ is surjective. 
Since $M_i \in \mathscr M_A$, we have in addition that $M_i \cap H = \{\ee\}$, hence $M_i \cap M'_i = \{\ee\}$. 
This shows that the canonical projection from $M_i'$ to the second factor $W_i$ is injective. 
Altogether, we see that $M_i$ is a quotient of the $\Z[H]$-submodule $M_i'$ of $W_i$, a contradiction. 

\smallskip
Now, set $\rho_0  =\rho$. 
For $i=1, \dots, k$, we inductively define an irreducible representation $\rho_i$ of $H_i$ as follows. 
The kernel of the induced representation $\ind_{H_{i-1}}^{H_i}(\rho_{i-1})$ is certainly contained in $H_{i-1}$. 
In particular, $\ind_{H_{i-1}}^{H_i}(\rho_{i-1})$ is non-trivial on $M_i$, since $M_i \not \leq H_{i-1}$ by \Cref{claim:2}. 
Therefore, the induced representation $\ind_{H_{i-1}}^{H_i}(\rho_{i-1})$ has an irreducible constituent $\rho_i$ whose restriction to $M_i$ is non-trivial. 

\begin{claim} \label{claim:3}
    The restriction of $\rho_k$ to $H$ contains $\rho$, and the restriction of $\rho_k$ to $M_i$ is non-trivial for all $i \in \{1, \dots, k\}$. 
\end{claim}
Indeed, the restriction of $\rho_k$ to $M_k$ is non-trivial by construction, and by Frobenius reciprocity, the restriction of $\rho_k$ to $H_{k-1}$ contains $\rho_{k-1}$. 
By induction, it follows that for every $i \in \{0, \dots, k\}$, the restriction of $\rho_k$ to $H_i$ contains $\rho_i$. 
In particular, the restriction of $\rho_k$ to $H_0 = H$ contains $\rho_0 = \rho$. 
Moreover, since $\rho_i$ is non-trivial on $M_i$, it follows that $\rho_k$ is non-trivial on $M_i$ for all $i \in \{1, \dots, k\}$. 

\smallskip
Finally, we take $\sigma$ to be any one of the irreducible components of $\ind_{H_k}^G(\rho_k)$. 
By Frobenius reciprocity, the restriction of $\sigma$ to $H_k$ contains $\rho_k$. 
It follows from \Cref{claim:3} that $\rho$ is contained in $\restr{\sigma}{H}$ and that $\sigma$ is non-trivial on every element of $\mathscr M_\rA = \mathscr M$. 
As $\rho$ is faithful on $H$, this implies that $\sigma$ is non-trivial on every minimal normal subgroup of $G$. 
Thus $\sigma$ is faithful, concluding the proof of \Cref{prop:inducefaithfultofaithful}. 
\end{proof}
    
The core of \Cref{thm:faithfulcenterpreservingirrep} is packaged in the following more technical theorem. 

\begin{thm} \label{thm:main-tech}
Let $H$ be a finite group possessing a faithful irreducible representation $\rho$. Let $p_1, \dots, p_m$ be distinct prime divisors of the order of $\rZ(H)$, and for each $i \in \{1, \dots, m\}$, let $h_i \in \rZ(H)$ be an element whose order is a power of $p_i$. 

Let $\iota \colon H \to G$ be an injective homomorphism to a finite group $G$, such that $\iota(h_i) \not \in \rZ(G)$ for any $i$. 
Then $G$ has an irreducible representation $\sigma$ such that $\rho$ is contained in $\sigma \circ \iota$ (in particular $\sigma \circ \iota$ is injective) and $\iota(h_i) \not \in \rZ(\sigma)$ for any $i \in \{1, \dots, m\}$. 
\end{thm}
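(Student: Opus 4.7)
We may assume that $\iota$ is the inclusion $H \hookrightarrow G$, and we proceed by induction on $|G| - |H|$, following the case structure of the proof of \Cref{prop:inducefaithfultofaithful}. The base case $G = H$ forces $m = 0$, for otherwise $h_i \in \rZ(H) = \rZ(G)$ would contradict the hypothesis $h_i \notin \rZ(G)$; the representation $\sigma := \rho$ then works.

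For the inductive step, we analyze the minimal normal subgroups of $G$. If some non-abelian minimal normal subgroup $N$ of $G$ satisfies $N \cap H = \{\ee\}$, then \Cref{lem:semidirect} applied to $H \leq HN$ yields a faithful irreducible representation $\rho'$ of $H' := HN$ whose restriction to $H$ contains $\rho$. Setting $\mathcal I' := \{\, i : h_i \in \rZ(H')\,\}$, the inductive hypothesis applied to $H' \leq G$ with the subfamily $(h_i)_{i \in \mathcal I'}$ produces an irreducible representation $\sigma$ of $G$ with $\sigma|_{H'} \supseteq \rho'$ and $h_i \notin \rZ(\sigma)$ for $i \in \mathcal I'$. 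For the remaining indices $i \notin \mathcal I'$, the faithfulness of $\sigma|_{H'}$ (ensured by $\sigma|_{H'} \supseteq \rho'$) forces $\rZ(\sigma) \cap H' \leq \rZ(H')$, whence $h_i \notin \rZ(H')$ implies $h_i \notin \rZ(\sigma)$. If instead every non-abelian minimal normal of $G$ intersects $H$ non-trivially and no abelian one has $M \cap H = \{\ee\}$, then the hypothesis of \Cref{prop:inducefaithfultofaithful} is vacuous and the proposition directly supplies a faithful irreducible $\sigma$ of $G$ extending $\rho$; the injectivity of $\sigma$ then transforms $h_i \notin \rZ(G)$ into $h_i \notin \rZ(\sigma)$.

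The delicate remaining case is when some abelian minimal normal subgroup $M$ of $G$ has $M \cap H = \{\ee\}$ while every non-abelian one intersects $H$ non-trivially. Here we select a simple $\Z[H]$-submodule $N \leq M$ and set $H' = HN \cong H \ltimes N$. If $H$ acts non-trivially on $N$, then Clifford theory (starting from any non-trivial character of $N$), combined with \Cref{lem:Goursat} to rule out any ``diagonal'' kernel, yields a faithful irreducible representation $\rho'$ of $H'$ extending $\rho$, and we recurse as in the non-abelian case above. If on the other hand $H$ centralizes $N$, then $N \cong C_p$ for some prime $p$, and $H' = H \times N$ may fail to admit a faithful irreducible representation---precisely when $p$ divides $|\rZ(H)|$---so the naive recursion is unavailable. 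The core technical step is then to construct $\sigma$ directly as an irreducible constituent of $\ind_{H'}^G(\rho \otimes \chi)$ for a well-chosen non-trivial character $\chi$ of $N$, and to verify $h_i \notin \rZ(\sigma)$ by a Mackey-type computation that exploits $h_i \notin \rZ(G)$ to produce a conjugate $\rho^g$ of $\rho$ with $\rho^g(h_i) \neq \rho(h_i)$. The distinctness of the primes $p_1, \dots, p_m$ becomes crucial in the recursive branches, to ensure that any new central element adjoined to the list (arising from an added $N$) is compatible with the existing primes. The main obstacle of the proof lies in this degenerate sub-case, where the absence of a faithful representation on the enlarged subgroup precludes a direct inductive descent and forces a more hands-on construction that must preserve the non-centrality of all the $h_i$'s simultaneously.
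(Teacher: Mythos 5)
Your base case and your first two branches are essentially sound: splitting the index set into $\mathcal I' = \{\, i : h_i \in \rZ(H')\,\}$, handling those indices by induction, and handling the remaining ones via the observation that a representation whose restriction to $H'$ is faithful satisfies $\rZ(\sigma) \cap H' \leq \rZ(H')$, is a correct mechanism (modulo a small repair: \Cref{lem:semidirect} requires $N$ to be minimal normal in the ambient group $HN$, not merely in $G$, so, as in the paper's proof of \Cref{prop:inducefaithfultofaithful}, you should first replace $N$ by a minimal normal subgroup of $HN$ contained in it). The genuine gap is in your third, ``delicate'' case. Your claim that if $H$ acts non-trivially on the simple $\Z[H]$-module $N$, then $H' = H \ltimes N$ admits a faithful irreducible representation extending $\rho$, is false, and no appeal to Clifford theory or to \Cref{lem:Goursat} can salvage it. Concretely, let $V = \F_p^2$, let $H = V^2 \rtimes \SL(V)$ (diagonal action), which has a faithful irreducible representation because the $\Z[H]$-module $V^2$ is cyclic, and let $N = V$ carry the natural action (with $V^2$ acting trivially). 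Then $H$ acts non-trivially on the simple module $N$, yet $H \ltimes N \cong V^3 \rtimes \SL(V)$ has no faithful irreducible representation at all, since the $\SL(V)$-module $V^3$ is not cyclic --- this is precisely the group appearing in \Cref{exe:dependenceonclass}. Taking $G = V^3 \rtimes \SL(V)$, with $M = N$ the third copy of $V$, puts you exactly in this branch, and your argument asserts the existence of a representation that does not exist; crossing everything with $S_3$ (so that $H$ becomes $(V^2\rtimes\SL(V)) \times A_3$ and $h_1$ generates the $A_3$-factor) produces instances with $m = 1$ in which your proof is forced through this very branch. The correct dichotomy is not triviality of the $H$-action on $N$, but the module-theoretic comparison between $N$ and the other relevant minimal normal subgroups, i.e.\ the section hypothesis of \Cref{prop:inducefaithfultofaithful}, and that hypothesis is not available for free at this point.

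Moreover, the sub-case you yourself identify as the core ($H$ centralizes $N$) is not proved: ``a well-chosen non-trivial character $\chi$'' and ``a Mackey-type computation'' are placeholders that are never instantiated, and it is exactly here, together with the failure above, that the real work of the theorem lies. The paper's proof does not recurse on enlarged subgroups in the abelian situation at all. It runs a double induction --- on the number $k$ of marked elements $h_i$ and, for fixed $k$, on $|\mathscr G|$ --- and attaches to each $i$ the witness $K_i = \llag [h_i, \mathscr G] \rrag$, so that $h_i$ becomes central in an image of $\mathscr G$ if and only if $K_i$ dies there. Three reduction steps (drop an index when $M_i \cap H \neq \{\ee\}$ or when $K_j \subseteq K_i$ for some $j \neq i$; pass to the quotient $\mathscr G / M_i$ when $K_i$ is not itself minimal normal) reduce to the configuration where every $K_i$ is a minimal normal subgroup avoiding $H$. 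There, the pairwise distinctness of the primes $p_i$ is what verifies the section hypothesis of \Cref{prop:inducefaithfultofaithful}: if each $M_j$ were a section of the span of the others, then each $M_j$ would be an elementary abelian $p_j$-group and simultaneously a section of a group of order coprime to $p_j$, which is absurd. In particular, the distinct primes are needed precisely at that point, not, as you suggest, to keep ``newly adjoined central elements'' compatible with the existing list.
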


\begin{proof}
Let $k \in \{0, 1, \dots, m\}$. 
Observe that the conclusion of \Cref{thm:main-tech} is equivalent to the following statement for $k = m$.

\begin{mainclaim*}
For each subset $I \subseteq \{1, \dots, m\}$ of size $k$ and for each finite group $\mathscr G$ in which $H$ embeds as a subgroup in such a way that $h_i \not \in \rZ(\mathscr G)$ for any $i \in I$, there exists an irreducible representation $\sigma$ of $\mathscr G$ whose restriction to $H$ contains 
$\rho$, and such that $h_i \not \in \rZ(\sigma)$ for any $i \in  I$.
\end{mainclaim*}

We will prove the Main Claim by induction on $k = |I|$. 
For the base case $k=0$, we induce the given representation $\rho$ of $H$ up to $\mathscr G$ and define $\sigma$ as one of the irreducible components of the latter. 
By Frobenius reciprocity, the restriction of $\sigma$ to $H$ contains $\rho$. 
Thus the Main Claim holds for $k=0$. 

Let now $k>0$, and assume that the Main Claim holds for $k-1$. 
We fix a subset $I \subseteq \{1, \dots, m\}$ of size $k$. 
We shall prove that the Main Claim holds for that subset $I$, by induction on the order of $\mathscr G$. 

In the base case, we have $\mathscr G = H$, hence the result holds by setting $\sigma = \rho$. 
We assume henceforth that $H$ is properly contained in $\mathscr G$. 
\smallskip

For each $i \in I$, we know by assumption that $h_i$ is not central in $\mathscr G$. 
Hence the commutator subgroup $[h_i, \mathscr G]$ is non-trivial. 
Let $K_i = \llag [h_i, \mathscr G] \rrag$ be its normal closure in $\mathscr G$, so that $K_i$ is a non-trivial normal subgroup of $\mathscr G$.
It will serve to witness whether $h_i$ becomes central: the image of $h_i$ under a homomorphism $\varphi$ is central in $\varphi(\mathscr{G})$ if and only if $\varphi(K_i)$ is trivial. 
For each $i \in I$, we choose a minimal normal subgroup $M_i$ of $\mathscr G$ contained in $K_i$. 

\begin{step} \label{step:main-tech1}
{If there exists $i \in I$ such that $M_i \cap H \neq \{\ee\}$, then the conclusion holds.}
\end{step}

We apply the induction hypothesis on $k$ using the subset $I' = I \setminus \{i\}$. 
Denote by $\sigma$ the irreducible representation  of $\mathscr G$  afforded thereby. 
Thus $\rho$ is contained in $\restr{\sigma}{H}$, and $\sigma(h_j)$ is not central in $\sigma(\mathscr G)$ for any $j \neq i$. 
Furthermore, the restriction of $\sigma$ to $H$ is injective, hence the restriction of $\sigma$ to  $M_i \cap H$ is also injective. 
In particular, $\sigma(M_i)$ is non-trivial, hence $\sigma(h_i)$ is not central in $\sigma(\mathscr G)$. 
\smallskip

The Main Claim therefore holds in this case, and we assume henceforth that $M_i \cap H = \{\ee\}$ for all $i \in I$. 

\begin{step} \label{step:main-tech2}
    If there are distinct $i, j \in I$ such that $K_j \subseteq K_i$, then the conclusion holds.
\end{step}

We again apply the induction hypothesis on $k$ using the subset $I' = I \setminus \{i\}$. 
Denote by $\sigma$ the irreducible representation  of $\mathscr G$ afforded thereby. 
Since $\sigma(h_j)$ is not central in $\sigma(\mathscr G)$ for $j \neq i$, the images $\sigma(K_j)$ are non-trivial. 
Hence also $\sigma(K_i)$ is non-trivial, so that $\sigma(h_i) \not \in \rZ(\sigma)$. 
\smallskip

From hereon, we assume additionally that $K_j \subseteq K_i$ only when $i=j$. 

\begin{step} \label{step:main-tech3}
{If there exists $i \in I$ such that $M_i \neq K_i$, then the  conclusion holds.}
\end{step}
 
In view of \Cref{step:main-tech2}, we must have $K_j \neq M_i$ for all $j \in  I \setminus \{i\}$. 
By the hypothesis of \Cref{step:main-tech3}, we also have $K_i \neq M_i$. 
Since $M_i$ is a minimal normal subgroup of $\mathscr G$, it follows that for each $j \in I$, either $M_i \cap K_j$ is trivial or $M_i$ is properly contained in $K_j$. 
In all cases, we see that the image of $K_j$ in the quotient $\mathscr G/M_i$ is non-trivial. 
Hence  the image of $h_j$ is non-central in that quotient for all $j \in I$. 
Moreover, $H$ maps injectively to $\mathscr G/M_i$ by the assumption made after \Cref{step:main-tech1}. 
Therefore, the induction hypothesis on the order of $\mathscr G$ affords the conclusion. 

\begin{step} \label{step:main-tech4}
If $M_i = K_i$ for all $i \in I$, then the conclusion holds. 
\end{step}

In other words, we assume that $K_i$ is a minimal normal subgroup of $\mathscr G$ for all $i \in I$. 
By the assumption made after \Cref{step:main-tech2}, we have that $M_i \neq M_j$ for all $i \neq j$. 
In particular, $M_i \cap M_j$ is trivial for all $i \neq j$, hence the normal subgroups $M_i$ commute pairwise. 

Let $\mathscr M = \{M_i \mid i \in I\}$ and $\mathscr M'$ be the set of those minimal normal subgroups of $\mathscr G$ that have a non-trivial intersection with $H$. 
Suppose that $\mathscr G$ has a minimal normal subgroup $N$ not contained in $\mathscr M \cup \mathscr M'$. Then $H$ and each $M_i$ maps injectively to the quotient $\mathscr G/N$. 
In that case, the required conclusion holds by induction on the order of $\mathscr G$. 
We will thus assume that the set of all minimal normal subgroups of $\mathscr G$ coincides with $\mathscr M \cup \mathscr M'$.

By the standing assumption following \Cref{step:main-tech1}, we know that $\mathscr M$ and $\mathscr M'$ are disjoint. Let $\mathscr M_\rA$ be the set consisting of the abelian elements of $\mathscr M$, and $\mathscr M'_\rA$ of those of $\mathscr M'$. 
We view again each $N \in \mathscr M_\rA \cup \mathscr M'_\rA$ as a $\Z[\mathscr G]$-module, with $\mathscr G$ acting by conjugation. 

\begin{claim*}
{For each non-empty subset $\mathscr W \subseteq \mathscr M_\rA$, there exists $V \in \mathscr W$ such that the $\Z[H]$-module $V$ is not isomorphic to any subquotient of the $\Z[H]$-module $\langle W \mid W \in \mathscr W \setminus \{V\}\rangle$.}
\end{claim*}

Let $i, j \in I$ be such that $M_i, M_j \in \mathscr M_\rA$. 
Since $M_j$ is normal in $\mathscr G$, we have $[h_i, M_j] \subseteq M_j$. 
Since $M_i = K_i$, we also have $[h_i, M_j] \subseteq [h_i, \mathscr G] \subseteq M_i$. As $M_i \cap M_j = \{\ee\}$, we infer that if $i \neq j$ then $[h_i, M_j] = \{\ee\}$, so that  $h_i$ commutes with $M_j$. 

Let now $\mathscr W \subseteq \mathscr M_\rA$ be non-empty, so that $\mathscr W = \{M_j \mid j \in J\}$ for some non-empty subset $J \subseteq I$. 
Suppose for a contradiction that for each $j \in J$, the $\Z[H]$-submodule $M_j$ is isomorphic to a subquotient of the $\Z[H]$-module $W_j = \langle M_i \mid i \in J \setminus \{j\} \rangle$. 
We have seen above that $h_j$ commutes with $M_i$ for all $i \neq j$, hence $h_j$ commutes with $W_j$. 
So $h_j$ acts trivially on every subquotient of $W_j$. 
In particular, $h_j$ commutes with $M_j$. 

Next, we deduce that $M_j$ must be an elementary abelian $p_j$-group for all $j \in J$. 
Indeed, pick $g \in \mathscr G$ such that $[h_j, g]$ is non-trivial (such an element exists because by assumption, $h_j$ is not central). 
In view of \Cref{step:main-tech3}, we have $[h_j, g] \in M_j$. 
Using that $h_j$ commutes with $M_j$, it follows that the commutator map $h \mapsto [h, g]$ is a non-trivial homomorphism from $\langle h_j \rangle $ to $M_j$. 
In consequence, $M_j$ contains an element of order $p_j$. 
Since $M_j$ is an abelian minimal normal subgroup of $\mathscr G$, it must be an elementary abelian $p_j$-group. 

But then $M_j$ is a subquotient of $W_j$ whose order is prime to that of $W_j$; this is absurd. 
The Claim stands proven. 

\medskip 
The Claim ensures that the hypothesis of \Cref{prop:inducefaithfultofaithful} is satisfied. 
Thus $\mathscr G$ has a faithful irreducible representation $\sigma$ whose restriction to $H$ contains $\rho$. 
Since $\sigma$ is faithful, we have $\rZ(\sigma) = \rZ(G)$, hence that representation obviously satisfies the requirements of the Main Claim, concluding \Cref{step:main-tech4}, and with it, the proof of \Cref{thm:main-tech}. 
\end{proof}

We can now complete the proof of \Cref{thm:faithfulcenterpreservingirrep}. 

\begin{proof}[Proof of \Cref{thm:faithfulcenterpreservingirrep}]
Let $p_1, \dots, p_n$ be the distinct prime divisors of the center $\rZ(H)$ of $H$. 
For each $i \in \{1, \dots, n\}$, let $P_i$ be the $p_i$-Sylow subgroup of $\rZ(H)$. 
By hypothesis, the subgroup $H$ has a faithful irreducible representation. 
Hence the center of $H$ is cyclic, and so is the group $P_i$ for each $i \in \{1, \dots, n\}$. 
Upon re-indexing the $P_i$'s, we may assume that the subset $\{ i \in \{1, \dots, n\} \mid P_i \not \leq \rZ(G)\}$  is $\{1, \dots, m\}$ for some $m \leq n$. 
Thus $P_j \leq \rZ(G)$ for all $j >m$. 
For all $i \in \{1, \dots, m\}$, we choose an element $h_i \in P_i$ that is a generator of the smallest subgroup of $P_i$ not contained in $\rZ(G)$. 

With these choices, let $\sigma$ be an irreducible representation afforded by \Cref{thm:main-tech}. 
The restriction of $\sigma$ to $H$ contains $\rho$, hence $\sigma$ is an irreducible component of $\ind_H^G(\rho)$ by Frobenius reciprocity. 
Since $\rho$ is faithful, we infer that  $\rZ(\sigma) \cap H \leq \rZ(H)$. 
Since $\rZ(H)$ is cyclic, so is the subgroup $\rZ(\sigma) \cap H$. 
For all $i \leq m$ we have $h_i \not \in \rZ(\sigma)$ and $h_i^{p_i} \in \rZ(G) \leq \rZ(\sigma)$, whereas for $j >m$ we have $P_j \leq \rZ(G) \leq \rZ(\sigma)$. 
It follows that the Sylow subgroups of $H \cap \rZ(\sigma)$ are $\langle h_1^{p_1}\rangle, \ldots, \langle h_m^{p_m}\rangle$ and $P_{m+1}, \ldots, P_n$. 
In other words, every Sylow subgroup of $H \cap \rZ(\sigma)$ is contained in $\rZ(G)$. Hence $H \cap \rZ(\sigma) \leq \rZ(G)$, as required. 
\end{proof}

In passing, we record the following straightforward consequence of \Cref{thm:faithfulcenterpreservingirrep}. 
Let $\rZ_2(G)$ denote the second center of $G$, that is, the preimage in $G$ of the center of $G / \rZ(G)$. 
The necessity of the additional assumption on $H \cap \rZ_2(G)$ below will be shown in \Cref{exe:D4C4}. 

\begin{cor} \label{cor:centerpreservingirrep}
Let $H \leq G$, and suppose that $\rZ_2(G) \cap H = \rZ(G) \cap H$. 
If $H$ admits an irreducible representation $\rho$ whose kernel $K$ is contained in $\rZ(G)$, then $\ind_H^G(\rho)$ has an irreducible constituent $\sigma$ whose restriction to $H$ has kernel $K$, and which is center-preserving on $H$. 
\end{cor}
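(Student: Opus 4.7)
The plan is to reduce to \Cref{thm:faithfulcenterpreservingirrep} by quotienting out the kernel $K$. Since $K \leq \rZ(G)$ and $K \leq H$, the subgroup $K$ is normal in $G$. Let $\pi \colon G \to \bar G := G/K$ be the canonical projection, set $\bar H = \pi(H) = H/K$, and let $\bar\rho$ be the faithful irreducible representation of $\bar H$ induced by $\rho$. Applying \Cref{thm:faithfulcenterpreservingirrep} to $\bar H \leq \bar G$ and $\bar\rho$ yields an irreducible constituent $\bar\sigma$ of $\ind_{\bar H}^{\bar G}(\bar\rho)$ that is center-preserving on $\bar H$, i.e.\ $\rZ(\bar\sigma) \cap \bar H \leq \rZ(\bar G)$.

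Define $\sigma = \bar\sigma \circ \pi$. Since $K$ is contained in the kernel of $\rho$ and is normal in $G$, the induced representation $\ind_H^G(\rho)$ factors through $\bar G$, and this factorization is precisely $\ind_{\bar H}^{\bar G}(\bar\rho)$; consequently irreducible constituents correspond bijectively, and $\sigma$ is an irreducible constituent of $\ind_H^G(\rho)$. By Frobenius reciprocity, $\rho$ embeds into $\restr{\sigma}{H}$, so $\ker(\restr{\sigma}{H}) \leq \ker(\rho) = K$; since also $K \leq \ker(\sigma)$, we conclude $\ker(\restr{\sigma}{H}) = K$.

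It remains to establish that $\sigma$ is center-preserving on $H$. Directly from the definition (and the characterization of the center of a representation via scalar operators), $\rZ(\sigma) = \pi^{-1}(\rZ(\bar\sigma))$. Therefore
\[
\rZ(\sigma) \cap H = \pi^{-1}\bigl(\rZ(\bar\sigma) \cap \bar H\bigr) \leq \pi^{-1}(\rZ(\bar G)),
\]
the inclusion being the center-preserving property of $\bar\sigma$. The main observation is now that $\pi^{-1}(\rZ(\bar G)) \leq \rZ_2(G)$: indeed, if $g \in G$ satisfies $\bar g \in \rZ(\bar G)$, then $[g, G] \leq K \leq \rZ(G)$, which says exactly that the image of $g$ in $G/\rZ(G)$ is central, i.e.\ $g \in \rZ_2(G)$. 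Combining this with the standing hypothesis gives
\[
\rZ(\sigma) \cap H \leq \rZ_2(G) \cap H = \rZ(G) \cap H \leq \rZ(G),
\]
which is the desired conclusion. No step poses a real obstacle; the only nontrivial input is \Cref{thm:faithfulcenterpreservingirrep} itself, and the role of the hypothesis $\rZ_2(G) \cap H = \rZ(G) \cap H$ is precisely to translate ``central modulo $K$'' back to ``central'' on the subgroup $H$.
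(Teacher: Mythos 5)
Your proof is correct and follows essentially the same route as the paper's: factor $\rho$ through the faithful irreducible representation of $H/K$, apply \Cref{thm:faithfulcenterpreservingirrep} to the pair $H/K \leq G/K$, inflate the resulting representation back to $G$, and use that the preimage of $\rZ(G/K)$ lies in $\rZ_2(G)$ (because $K$ is central) together with the hypothesis $\rZ_2(G) \cap H = \rZ(G) \cap H$. The only cosmetic difference is that you identify $\sigma$ as a constituent of $\ind_H^G(\rho)$ via the compatibility of induction with inflation, whereas the paper gets the same conclusion by applying Frobenius reciprocity to the restriction; both are routine and valid.
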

\begin{proof}
By assumption, the representation $\rho$ factors through a faithful irreducible representation $\rho'$ of the quotient $H/K$. 
Let $\sigma$ be the inflation to $G$ of the irreducible representation $\sigma'$ of $G/K$ afforded by \Cref{thm:faithfulcenterpreservingirrep}, applied to $\rho'$ and the pair $H / K \leq G / K$. 
By construction, $\rZ(\sigma) \cap H$ is the preimage of $\rZ(\sigma') \cap H/K = \rZ(G/K) \cap H/K$ under the quotient map $G \to G/K$. 
Because $K$ is central, the preimage of $\rZ(G/K)$ is contained in $\rZ_2(G)$. 
Thus $\rZ(\sigma) \cap H$ is contained in $\rZ_2(G) \cap H = \rZ(G) \cap H$. 
This shows that $\sigma$ is center-preserving on $H$. 

Lastly, the restriction of $\sigma'$ to $H/K$ contains $\rho'$ by Frobenius reciprocity, and since $\sigma$ and $\rho$ are inflated from $\sigma'$ and $\rho'$, it follows that the restriction of $\sigma$ to $H$ contains $\rho$. 
This implies that $\sigma$ is a constituent of $\ind_H^G(\rho)$, and that $\ker(\sigma) \cap H \leq \ker(\rho) = K$. 
\end{proof}

\subsection{Examples} \label{subsec:examplescpr}
For the duration of this section, we assume for simplicity that $\Fi$ is algebraically closed (always of cross-characteristic). 

\begin{exe} \label{exe:exactly-one}
Consider indeed the Heisenberg group $\mathscr H$ over $\Z/4\Z$, that is, 
\[
\mathscr H = \langle x, y, z \mid x^{4}, y^{4}, z^{4}, [x, z], [y, z], [x,y]z^{-1}\rangle. 
\]
Let $G = \langle x, y^2 \rangle$ be the subgroup of $\mathscr H$ of order $16$ generated by $x$ and $y^2$, and set $H = \langle x \rangle \cong C_4$. 
The center of $G$ is $\langle x^2, z^2 \rangle \cong C_2 \times C_2$, so that the kernel of each irreducible representation of $G$ contains $x^2$ or $z^2$ or $x^2 z^2$. 
An irreducible representation $\sigma$ of $G$ is faithful on $H$ if and only if $\sigma(x^2)$ is non-trivial, while it is center-preserving on $H$ if and only if $\sigma(z^2)$ is non-trivial. 
Thus, in order for $\sigma$ to satisfy both conditions, one must have that $x^2 z^2 \in \ker (\sigma)$. 

The group $H$ has exactly two faithful irreducible characters. 
If $\rho$ is any of them, then the induced representation $\ind_H^G(\rho)$ decomposes as a direct sum $\sigma_0 \oplus \sigma_1 \oplus \sigma_2$ of three irreducible representations of $G$, with $\sigma_0$ of degree~$2$ and $\sigma_1, \sigma_2$ of degree~$1$. 
In accordance with \Cref{thm:faithfulcenterpreservingirrep}, one can check that the representation $\sigma_0$ is the unique irreducible representation of $G$ that is faithful and center-preserving on $H$. 
\end{exe}

\begin{rem} \label{rem:centerpreservingnormalsubgroup}
\Cref{exe:exactly-one} shows that in \Cref{thm:faithfulcenterpreservingirrep}, one can generally not expect more than one irreducible constituent of $\ind_H^G(\rho)$ to be center-preserving on $H$. 
There is however a special case where \emph{every} irreducible constituent of $\ind_H^G(\rho)$ is center-preserving on $H$: when $H$ is a normal subgroup of $G$. 
By virtue of Frobenius reciprocity, this is a consequence of the following basic algebraic fact. 

\smallskip
\textit{Let $\sigma \colon G \to L$ be a homomorphism and $H \triangleleft G$ be a normal subgroup. 
If the restriction of $\sigma$ to $H$ is injective, then $\sigma$ is center-preserving on $H$. }
\smallskip

Indeed, an element $h \in H$ maps to a central element of $\sigma(G)$ if and only if $[g, h] \in \ker(\sigma)$ for all $g \in G$. 
Since $H$ is normal, we also have $[g, h] \in H$ for all $g \in G$. Because $\restr{\sigma}{H}$ is injective, we infer that $[g, h] \in H \cap \ker(\sigma)$ is trivial, hence $H \cap \rZ(\sigma) \leq \rZ(G)$. 
\end{rem}

\begin{rem} \label{rem:hypothesesnecessity}
The next two examples show that if one replaces the hypothesis \guillemotleft{$H$ has a faithful irreducible representation $\rho$}\guillemotright{} in \Cref{thm:faithfulcenterpreservingirrep} by any of the weaker assumptions 
\begin{enumerate}[leftmargin=2em,itemsep=.2ex,label=\textup{$\bullet$}]
    \item \guillemotleft{$G$ has an irreducible representation whose restriction $\rho$ to $H$ is faithful}\guillemotright{}, or 
    \item \guillemotleft{$H$ has an irreducible representation $\rho$ whose induction to $G$ is faithful}\guillemotright{} (equivalently, such that $\mathrm{core}_G(\ker(\rho)) = \{\ee\}$), or
    \item \guillemotleft{$H$ has an irreducible representation $\rho$ such that $\ker(\rho) \leq \rZ(G)$}\guillemotright{}, 
\end{enumerate}
it may happen that no irreducible representation of $G$ is center-preserving on the subgroup $H$, let alone an irreducible constituent of $\ind_H^G(\rho)$. 
In particular, the conclusion of \Cref{thm:main-tech} does not hold under any of these weaker assumptions, and neither does the potential variant of \Cref{cor:faithfulcenterpreservingirrep} where (i) would be replaced by one of the weaker conditions above, and (ii) by the existence of an irreducible representation of $G$ which is center-preserving on $H$. 

In \Cref{cor:centerpreservingirrep}, we provided an intermediate condition that does ensure the existence of an irreducible representation of $G$ that is center-preserving (but not necessarily faithful) on $H$. 
\end{rem}

\begin{exe} \label{exe:D4D4D4}
Let $D_4$ be the dihedral group of order $8$, and $E_1, E_2, E_3$ be three groups isomorphic to $D_4$. 
Let $z_i \in E_i$ be the non-trivial element of the center of $E_i$, and set 
\[
G = (E_1 \times E_2 \times E_3) / \langle z_1 z_2 z_3\rangle. 
\]
The group $G$ appeared in \cite[Appendix]{CH20} as an example of a group whose irreducible representations each have a non-abelian kernel. 
Denote by $y \mapsto \bar y$ the canonical projection of $E_1 \times E_2 \times E_3$ to $G$. 
Let $x_i \in E_i$ be a non-central element of order~$2$, and let $H$ be the image of $\langle \bar x_1, \bar x_2, \bar x_3 \rangle$ in $G$. 
Observe that $\rZ(G) \cap H = \{\ee\}$. 
Moreover, $G$ has irreducible representations whose restrictions to $H$ are faithful. 
However, for each such representation, the image of some $\bar{x}_i$ is central. 
Indeed, the center of $G$ is isomorphic to $C_2 \times C_2$, and its non-trivial elements are $\bar{z}_1, \bar{z}_2$ and $\bar{z}_3$. 
For each irreducible representation $\sigma$ of $G$, there must exist some $i$ such that $\sigma(\bar{z}_i)$ is trivial; hence $\sigma(\bar{x}_i)$ is central. 
Of course, the group $H$ itself has no faithful irreducible representation. 
However, $H$ does have an irreducible representation $\rho$ with $\core_G(\ker(\rho)) = \{\ee\}$.
\end{exe}

\begin{exe} \label{exe:D4C4}
Let
\[
G = D_4 \ltimes C_4 = \langle a,b,c \mid a^4, b^2, c^4, b^{-1}aba, a^{-1}cac ,b^{-1}cbc  \rangle,
\]
where the standard generators $a$, $b$ of $D_4$ act on $C_4$ by inverting its generator $c$. 
Note that $\rZ(G) = \langle a^2, c^2 \rangle \cong C_2 \times C_2$. 
The quotient $G/\langle a^{2}c^{2} \rangle$ is isomorphic to the central product 
\(
D_4 \circ C_4
\)
(also known as the Pauli group), that is, to the quotient of $D_4 \times C_4$ by $\langle a^{2} c^{2} \rangle$. 
In fact, if $x \mapsto \bar{x}$ denotes the quotient map and $d = abc$, then $G/\langle a^{2}c^{2} \rangle = \langle \bar{a}, \bar{b} \rangle \circ \langle \bar{d} \rangle$ with $\langle \bar{a}, \bar{b} \rangle \cong D_4$ and $\langle \bar{d} \rangle \cong C_4$. 
The group $D_4 \circ C_4$ has two faithful irreducible representations of degree $2$, whose inflations $\rho_1$ and $\rho_2$ to $G$ are irreducible representations with central kernel.  
However, as $\rZ(G/\langle a^{2}c^{2} \rangle) = \langle \bar{d} \rangle$, we have $\rZ(\rho_1) = \rZ(\rho_2) = \langle a^2, d \rangle \gneq \rZ(G)$. 
Every other irreducible representation of $G$ is either of degree 1, or inflated from a quotient isomorphic to $D_4$. 
In particular, the center of these irreducible representations has index at most $4$. 
Altogether, $G$ has no irreducible representation which is center-preserving. 
The group $G$ is of smallest size among all such examples (but there are five others of order $32$). 
\end{exe}

\section{Faithful projective representations} \label{sec:fpr}

The goal of this section is to interpret the notion of center-preserving representation in the context of projective representations. 
After some recollections on projective representations and their relationship with central extensions by $\Fi^\times$, we first deal with a single group $G$, and address the corresponding questions relative to a subgroup $H \leq G$ afterwards.

\subsection{Projective representations} \label{subsec:projectiverepresentations}

Recall that a \emphb{projective $\Fi$-representation of $G$} is a homomorphism $\ol{\rho} \colon G \to \PGL(V)$ to the projective linear group of a vector $\Fi$-space $V$. 
The field $\Fi$ is fixed and will be omitted from the notation, as we did for linear representations. 
With any projective representation $\ol{\rho}$ of $G$ is associated a central extension
\[
1 \to \Fi^\times \to \tilde{G}_\rho \to G \to 1
\]
and a linear representation ${\rho}$ of $\tilde{G}_\rho$ lifting $\ol{\rho}$, in a way that the cohomology class $c_{\ol{\rho}} \in \rH^2(G,\Fi^\times)$ of the extension $\tilde{G}_\rho$ is the image of $\ol{\rho}$ under the map $\rH^1(G,\PGL(V)) \to \rH^2(G,\Fi^\times)$ induced in group cohomology by the exact sequence
\[
1 \to \Fi^\times \to \GL(V) \to \PGL(V) \to 1
\]
of groups with trivial action of $G$. 
The image $c_{\ol{\rho}}$ (or just $c$ if $\ol{\rho}$ is fixed) of $\ol{\rho}$ in $\rH^2(G,\Fi^\times)$ is called the \emphb{class of $\ol{\rho}$}. 
(The class of $\ol{\rho}$ is also the image of the class of the central extension $\GL(V) \to \PGL(V)$ under the map $\rH^2(\PGL(V),\Fi^\times) \to \rH^2(G,\Fi^\times)$ induced by $\ol{\rho}$.)
The set of (equivalence classes of) projective representations of $G$ is partitioned along their classes, and we will call \emphb{$c$-projective} any projective representation of class $c$.\footnote{The term `$c$-projective representation' is sometimes abbreviated `$c$-representation' in the literature. } 

Schur showed last century that there is an equivalence between the categories of projective representations of $G$ with class $c \in \rH^2(G,\Fi^\times)$, and of linear representations of the central extension $\tilde{G}_c$ determined by (any $2$-cocycle representing) $c$ under which the central kernel $\Fi^\times$ acts by scalars. 
When $\Fi$ is algebraically closed, he also showed that every finite group admits what is now called a \emphb{Schur cover}, a central extension of $G$ by the finite abelian group $\rH_2(G,\Z)$ to which every projective $\Fi$-representation of $G$ lifts. 
When $\Fi$ is not algebraically closed, the theory is more complicated. 
Indeed, already the smallest non-trivial group $C_2$ admits a projective $\Q$-representation that does not lift to any finite central extension of $C_2$: 

\begin{exe} \label{exe:liftcyclicgroup}
Let $a \in \Fi^\times$. 
The projective $\Fi$-representation of the cyclic group $C_n$ of order $n$ given by 
\[
C_n = \langle h \rangle \to \PGL_n(\Fi): h \mapsto {\scriptstyle {
\left[ {\begin{matrix}  0 & 1 & 0 & \cdots & 0 \\ 
                            0 & \ddots & \ddots & \ddots & \vdots \\ 
                            \vdots & \ddots & \ddots & \ddots & 0 \\
                            0 & \ddots & \ddots & \ddots & 1 \\
                            a & 0 & \cdots & 0 & 0 
\end{matrix}} \right]}}
\]
admits a finite cover in $\GL_n(\Fi)$ if and only if $a$ is the product in $\Fi$ of an $n$\textsuperscript{th} power and a root of unity. 
\end{exe}

\medbreak
For our purposes, it will be convenient to have at hand a subextension of $\tilde{G}_c$ which is a finite group. 
To this end, let $\muF$ denote the group of roots of unity in $\Fi$, and obverve that the inclusion $\muF \leq \Fi^\times$ induces an embedding of $\rH^2(G,\muF)$ in $\rH^2(G,\Fi^\times)$. 
Indeed, the induced map fits in the long exact sequence
\[
\rH^1(G,\Fi^\times / \muF) \to \rH^2(G,\muF) \to \rH^2(G,\Fi^\times) \to \rH^2(G,\Fi^\times / \muF),
\]
and because $\Fi^\times / \muF$ is torsion-free, the group $\rH^1(G,\Fi^\times / \muF) = \Hom(G,\Fi^\times / \muF)$ is trivial. 
For the remainder of this section, we will identify $\rH^2(G,\muF)$ with its image in $\rH^2(G,\Fi^\times)$. 
By definition, this image coincides with the image in cohomology of the subgroup $\rZ^2(G,\muF)$ of $\rZ^2(G,\Fi^\times)$;\footnote{The reader is entrusted not to confuse groups of $2$-cocycles with the various centers also appearing in the text under the letter $\rZ$.} 
or in other words, a class $c \in \rH^2(G,\Fi^\times)$ belongs to $\rH^2(G,\muF)$ if and only if $c$ can be represented by a $2$-cocycle whose values are all roots of unity in $\Fi$. 
Given $\zz \in \rZ^2(G,\muF)$, we let $\muz$ denote the finite subgroup of $\Fi^\times$ generated by the image of $\zz$. 

Note that $\rH^2(G,\muF)$ is \emph{not} the torsion subgroup of $\rH^2(G,\Fi^\times)$. 
The two may be distinct, and in fact the (possibly infinite) abelian group $\rH^2(G,\Fi^\times)$ is already torsion, of exponent dividing $|G|$. 
Yamazaki \cite[Proposition~3]{Yamazaki64a} proved that $\rH^2(G,\muF) = \rH^2(G,\Fi^\times)$ if and only if $\Fi^\times = \muF \cdot \Fi^{\times |G^\mathrm{ab}|}$ (cf.\ \Cref{exe:liftcyclicgroup}). 
This holds in particular when $\Fi$ is a finite field, when $\Fi = \R$, or when $\Fi^\times$ is a $|G^\mathrm{ab}|$-divisible group (that is, when every element of $\Fi^\times$ admits a $|G^\mathrm{ab}|$\textsuperscript{th} root), and a fortiori when $\Fi$ is algebraically closed. 

By definition of $\mu_\zz$, any $\zz \in \rZ^2(G,\muF)$ determines a central extension $G_\zz$ of $G$ by the finite group $\muz$, whose class in $\rH^2(G,\muz)$ is the image of the corestriction of $\zz$. 
We thus have the following embedding of central extensions of $G$: 
\[\begin{tikzcd}
	1 & {\Fi^\times} & {\tilde{G}_c} & G & 1 \\
	1 & {\mu_\zz} & {G_\zz} & G & 1
	\arrow[from=1-1, to=1-2]
	\arrow[from=1-2, to=1-3]
	\arrow[from=1-3, to=1-4]
	\arrow[from=1-4, to=1-5]
	\arrow[from=2-1, to=2-2]
	\arrow[hook, from=2-2, to=1-2]
	\arrow[from=2-2, to=2-3]
	\arrow[hook, from=2-3, to=1-3]
	\arrow[from=2-3, to=2-4]
	\arrow[equal, from=2-4, to=1-4]
	\arrow[from=2-4, to=2-5]
\end{tikzcd}\]
where $c$ is the image of $\zz$ in $\rH^2(G,\muF) \leq \rH^2(G,\Fi^\times)$. 
It follows that for $c \in \rH^2(G,\muF)$ and for any $\zz \in \rZ^2(G,\muF)$ representing $c$, every $c$-projective representation of $G$ lifts to a linear representation of the finite group $G_\zz$ (simply restrict to $G_\zz$ its lift to $\tilde{G}_c$). 
Conversely, if there exists a finite central extension of $G$ lifting even just one of the $c$-projective representations of $G$, then $c \in \rH^2(G,\muF)$. 
This equivalence can be found in \cite[Proposition~1]{Yamazaki64a}. 

Furthermore, Yamazaki \cite[Theorem~1]{Yamazaki64a} showed that every finite group $G$ admits a \emphb{Yamazaki cover}, that is, a finite central extension to which lifts any projective representation of $G$ whose class lies in $\rH^2(G,\muF)$. 
Thus, when say $\Fi^\times$ is a $|G^\mathrm{ab}|$-divisible group, every projective representation lifts to any Yamazaki cover of $G$. 
Yamazaki also gave a criterion \cite[Theorem~2]{Yamazaki64a} for the existence of a Yamazaki cover of smallest possible order $|\rH^2(G,\muF)| \cdot |G|$.\footnote{Yamazaki calls representations whose class belongs to $\rH^2(G,\muF)$ `linearizable', and calls Yamazaki covers of $G$ of order $|\rH^2(G,\muF)| \cdot |G|$ `quasi-representation-groups of $G$ over $\Fi$' (in line with the term `Darstellungsgruppe' used by Schur for the cover he introduced). } 
If now $\Fi$ is algebraically closed, one recovers from the above that any minimal Yamazaki cover is a Schur cover. 
As with Schur covers, a minimal Yamazaki cover need not be unique. 

\medbreak
A more modern approach relates the $c$-projective representations of $G$ with the modules over the $c$-twisted group algebra $\Fi_c[G]$ of $G$. 
When the characteristic of $\Fi$ does not divide $|G|$ (as is assumed throughout), $\Fi_c[G]$ is a semisimple $\Fi$-algebra with basis $G$, which is the quotient of the group algebra $\Fi[\tilde{G}_c]$ after identifying the images of $\Fi^\times$ in the coefficients $\Fi$ and in the group basis $\tilde{G}_c$. 
(We will not make use of twisted group algebras.) 

For additional background on projective representations, we refer the reader to \cite[VI]{BKZ18} and \cite{Karpilovsky85}. 
For the development of the theory over an arbitrary field, the reader is invited to consult the original works \cite{Yamazaki64a,Yamazaki64b} of Yamazaki. 
A description of minimal Yamazaki covers of $G$ when $\rH^2(G,\muF) = \rH^2(G,\Fi^\times)$ can be found in \cite[Theorem~4.8]{MargolisSchnabel20}. 
A criterion for the existence of a faithful projective representation can be found in \cite{Pahlings68} and \cite{BH13}. 
The remainder of this section will be concerned with projective representations that are faithful on a subgroup. 

\medbreak
To conclude this section, we record for later use the following well-known lemma. 

\begin{lem} \label{lem:orderofcocyles}
Let $A$ be an abelian group endowed with an action of $G$, and assume that $A$ is $|G|$-divisible. 
Each class $c \in \rH^2(G,A)$ admits a representative $\zz$ in $\rZ^2(G,A)$ of order dividing $|G|$. 
\end{lem}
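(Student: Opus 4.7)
The plan is to exploit the classical fact that $\rH^2(G,A)$ is annihilated by $|G|$ whenever $G$ is a finite group acting on an abelian group $A$, and then to leverage the divisibility hypothesis at the level of cochains in order to pass from a cohomological vanishing statement to a cocycle-level one.

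The first step is to recall why $|G| \cdot \rH^2(G,A) = 0$. This follows from the standard composition $\mathrm{cor} \circ \mathrm{res}$ applied to the trivial subgroup: the restriction of any class to the trivial subgroup vanishes (since $\rH^{\geq 1}$ of the trivial group is zero), while the composite $\mathrm{cor} \circ \mathrm{res} \colon \rH^2(G,A) \to \rH^2(G,A)$ is multiplication by $[G:1] = |G|$. Consequently, for any representative $\zz_0 \in \rZ^2(G,A)$ of the given class $c$, the $2$-cocycle $|G| \cdot \zz_0$ is a coboundary, that is, $|G| \cdot \zz_0 = \delta \beta$ for some $1$-cochain $\beta \in C^1(G,A) = \mathrm{Maps}(G,A)$.

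The second step consists in correcting $\zz_0$ by a suitable coboundary. Since $A$ is $|G|$-divisible, so is the group of set-maps $C^1(G,A)$: indeed, we may choose, pointwise for each $g \in G$, an element $\beta_0(g) \in A$ satisfying $|G| \cdot \beta_0(g) = \beta(g)$. Setting
\[
\zz = \zz_0 - \delta \beta_0,
\]
we obtain a $2$-cocycle cohomologous to $\zz_0$ (hence still a representative of $c$), and
\[
|G| \cdot \zz = |G| \cdot \zz_0 - \delta(|G| \cdot \beta_0) = \delta \beta - \delta \beta = 0,
\]
so that the order of $\zz$ in the abelian group $\rZ^2(G,A)$ divides $|G|$, as required.

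There is no real obstacle here; the only subtlety is to apply divisibility not to an element of $A$ but to a $1$-cochain, which is immediate because $|G|$-divisibility is preserved by the formation of arbitrary mapping groups into $A$. I would expect the argument to be half a page at most.
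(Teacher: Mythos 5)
Your proof is correct and takes essentially the same approach as the paper: both arguments first show that the $|G|$\textsuperscript{th} power of a representative $\zz_0$ of $c$ is a coboundary $\delta\beta$, then use the $|G|$-divisibility of $A$ pointwise to divide the $1$-cochain $\beta$ by $|G|$, and finally correct $\zz_0$ by the coboundary of that root. The only difference is cosmetic: where you invoke $\mathrm{cor}\circ\mathrm{res} = |G|$ abstractly to get $\delta\beta$, the paper carries out the explicit averaging computation $f(g)=\prod_{h\in G}\zz_0(g,h)$, which is precisely the standard proof of the fact you cite.
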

\begin{proof}
Pick any $\zz \in \rZ^2(G,A)$ representing the class $c$, and set $f(g) = \prod_{h \in G} \zz(g,h)$. 
By multiplying $2$-cocycle identities, we compute
\[
f(g) \leftindex{^g} f(h) = \prod_{k \in G} \zz(g,hk) \prod_{k \in G} \leftindex{^g} \zz(h,k) = \prod_{k \in G} \zz(gh,k) \prod_{k \in G} \zz(g,h) = f(gh) \zz(g,h)^{|G|},
\]
and deduce that $\zz^{|G|}$ is the coboundary of $f \colon G \to A$.\footnote{This classical trick shows by the way that the exponent of $\rH^n(G,A)$ divides $|G|$ (regardless of whether $A$ is $|G|$-divisible).} 
Because $A$ is $|G|$-divisible, we can pick for each $g \in G$ an element $\dot{f}(g) \in A$ such that $\dot{f}(g)^{|G|} = f(g)$. 
The $2$-cocycle ${\zz}'$ given by
\[
{\zz}'(g,h) = \zz(g,h) \dot{f}(gh) \dot{f}(g)^{-1} \leftindex{^g} \dot{f}(h)^{-1}
\]
then represents the same class as $\zz$, and satisfies
\[
{\zz}'(g,h)^{|G|} = \zz(g,h)^{|G|} f(gh) f(g)^{-1} \leftindex{^g} f(h)^{-1} = 1
\]
on the nose. 
\end{proof}

\subsection{The absolute case} \label{subsec:absolutefpr}

\begin{defn}
Recall that a $c$-projective representation $\ol \sigma$ is \emphb{faithful} if $\ker(\ol \sigma) = \{\ee\}$. 
Some groups admit no faithful projective representation, so it is natural to introduce the following more flexible notion. 
We say a $c$-projective representation $\ol \sigma$ is \emphb{$c$-faithful} if $ \ker(\ol \sigma)$ coincides with the intersection $\rK_c(G)$ of the kernels of all irreducible $c$-projective representations of $G$. 
\end{defn}

Given a representation $\sigma$ of a group $G$, the \emphb{quasikernel} $\qker(\sigma)$ (sometimes \emphb{projective kernel}) \emphb{of $\sigma$} is the kernel of the projectivization of $\sigma$. 
Equivalently, $\qker(\sigma)$ is the preimage under $\sigma$ of the subgroup of transformations in $\sigma(G)$ that are scalars lying in $\Fi^\times$. 
The quasikernel $\qker(\sigma)$ is obviously a subgroup of $\rZ(\sigma)$. 
They coincide when $\sigma$ is irreducible and $\Fi$ is $|G|$-closed by virtue of Schur's lemma, but not in general. 
Here and throughout, we say for short that $\Fi$ is \emphb{$m$-closed} when $\Fi$ contains all $m$\textsuperscript{th} roots of unity. 
By definition, $\rK_1(G)$ is the intersection of the quasikernels of all irreducible representations of $G$. 

Given a class $c \in \rH^2(G, \Fi^\times)$, we denote by $\rZ_c(G)$ the image of the center $\rZ(\tilde{G}_c)$ under the quotient map $\tilde{G}_c \to G$ associated with the central extension
\(
1 \to \Fi^\times \to \tilde{G}_c \to G \to 1
\)
determined by $c$. 
If $c$ happens to lie in $\rH^2(G,\muF)$, so that $c$ can be represented by some $\zz \in \rZ^2(G,\muF)$, then the normal subgroup $\rZ_c(G)$ is also the image of the center $\rZ(G_\zz)$ of the subextension $G_\zz$ previously introduced. 
In case $c=1$ is the trivial class, we have $\rZ_1(G) = \rZ(G)$, but in general the obvious inclusion $\rZ_c(G) \leq \rZ(G)$ can be strict. 

Note that $\rZ_c(G)$ and $\rK_c(G)$ depend on $\Fi$ through $c$. 
When $\Fi^\times$ is $|G|$-divisible, the subgroup $\rZ_c(G)$ has the following representation-theoretic interpretation, which is easily proved using \Cref{thm:faithfulcenterpreservingirrep}. 

\begin{lem} \label{lem:c-center}
Let $c \in \rH^2(G,\muF)$. 
The normal subgroup $\rZ_c(G)$ contains the intersection $\rK_c(G)$ of the kernels of all irreducible $c$-projective representations of $G$. 
If $\muF$ is $|G|$-divisible, then $\rZ_c(G)$ coincides with $\rK_c(G)$. 
\end{lem}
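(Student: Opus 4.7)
The plan is to pass to a finite central extension $G_\zz$ of $G$ determined by a $2$-cocycle $\zz \in \rZ^2(G, \muF)$ representing $c$, so that the irreducible $c$-projective representations of $G$ correspond to irreducible linear $\Fi$-representations $\sigma$ of $G_\zz$ on which $\muz$ acts via the tautological character $\chi_0 \colon \muz \hookrightarrow \Fi^\times$. Under this correspondence, $g \in \ker(\ol{\sigma})$ amounts to $\sigma(\tilde g)$ being a scalar matrix for any lift $\tilde g \in G_\zz$ of $g$, and $\rZ_c(G)$ is the image of $\rZ(G_\zz)$ in $G$.

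For the first inclusion $\rK_c(G) \subseteq \rZ_c(G)$, no divisibility hypothesis is needed: the key is to form the \emph{faithful} induced representation $\tau := \ind_{\muz}^{G_\zz}(\chi_0)$. Its kernel equals $\core_{G_\zz}(\ker \chi_0) = \{\ee\}$ because $\chi_0$ is faithful and $\muz$ is already normal; moreover, by Frobenius reciprocity $\tau$ decomposes as a direct sum $\bigoplus_\sigma (\dim \sigma) \cdot \sigma$ running over the irreducibles of $G_\zz$ with $\muz$-action $\chi_0$. For $g \in \rK_c(G)$ with a lift $\tilde g$, every such $\sigma$ sends $\tilde g$ to a scalar, hence $\tau(\tilde g)$ commutes with the image of $\tau$. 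Faithfulness of $\tau$ then forces $[\tilde g, x] = \ee$ for all $x \in G_\zz$, i.e.\ $\tilde g \in \rZ(G_\zz)$, so $g \in \rZ_c(G)$.

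For the reverse inclusion $\rZ_c(G) \subseteq \rK_c(G)$ under $|G|$-divisibility of $\muF$, the plan is to verify that $\Fi$ becomes a splitting field for $G_\zz$ and then to invoke Schur's lemma. Inspecting the $p$-primary decomposition of $\muF$, the divisibility assumption is equivalent to requiring $\mu_{p^\infty} \subseteq \muF$ for every prime $p$ dividing $|G|$. Using \Cref{lem:orderofcocyles} to pick a representative $\zz$ of order dividing $|G|$, the order of $\muz$ divides $|G|$, so $|G_\zz|$ divides $|G|^2$; consequently $\muF$ contains all $|G_\zz|$-th roots of unity. Hence $\Fi$ is a splitting field for $G_\zz$, and for any $g \in \rZ_c(G)$ with central lift $\tilde g \in \rZ(G_\zz)$, Schur's lemma yields $\sigma(\tilde g)$ scalar for every irreducible $\Fi$-representation of $G_\zz$, in particular for those lifting irreducible $c$-projective representations of $G$. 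Thus $\ol{\sigma}(g) = \ee$ for every such $\ol{\sigma}$, showing $g \in \rK_c(G)$.

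The bookkeeping between $\tilde G_c$, $G_\zz$, and the roots of unity required for $\Fi$ to split $G_\zz$ is the only mildly delicate point; once the representation $\tau = \ind_{\muz}^{G_\zz}(\chi_0)$ is identified as faithful, the first direction is immediate, and the second reduces to a classical application of Schur's lemma.
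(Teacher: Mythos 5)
Your proof of the first inclusion $\rK_c(G) \leq \rZ_c(G)$ is correct, and it takes a genuinely different route from the paper's. The paper obtains this inclusion as a consequence of \Cref{thm:faithfulcenterpreservingirrep}: for $g \in G_\zz \setminus \rZ(G_\zz)$, it applies that theorem to the cyclic subgroup $\langle g \rangle \leq G_\zz$ to get an irreducible representation of $G_\zz$ whose quasikernel misses $g$, and then descends its projectivization. You replace this with the single faithful representation $\tau = \ind_{\muz}^{G_\zz}(\chi_0)$ and a block-diagonal commutation argument; this is more elementary (it bypasses the main theorem entirely), and it has the added virtue that every irreducible constituent of $\tau$ has $\muz$ acting through the tautological character, so its projectivization is a projective representation of class \emph{exactly} $c$ --- a point that requires care when descending an arbitrary irreducible representation of $G_\zz$. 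Two details deserve a line each: the multiplicities $\dim\sigma$ in your decomposition of $\tau$ are only correct over a splitting field (harmless, since only the set of constituents matters), and the identification of the constituents as the representations on which $\muz$ acts by $\chi_0$-scalars uses the centrality of $\muz$, which forces $\restr{\sigma}{\muz}$ to be isotypic, so that ``contains $\chi_0$'' upgrades to ``is $\chi_0$-isotypic''.

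Your proof of the reverse inclusion follows the same route as the paper (\Cref{lem:orderofcocyles}, then Schur's lemma on $G_\zz$), but it contains a genuine error: $|G|$-divisibility of $\muF$ is \emph{not} equivalent to $\mu_{p^\infty} \leq \muF$ for every prime $p$ dividing $|G|$. A torsion abelian group whose $p$-primary component is trivial is vacuously $p$-divisible; for instance $\{\pm 1\}$, the group of roots of unity of $\Q$ or of $\R$, is $p$-divisible for every odd prime $p$ yet contains no non-trivial $p$-th root of unity. Hence your step ``consequently $\muF$ contains all $|G_\zz|$-th roots of unity'' does not follow from the divisibility hypothesis, and this is exactly where the difficulty sits: without those roots of unity inside $\Fi$, Schur's lemma does not force central elements to act by scalars. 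Concretely, take $G = C_3$, $\Fi = \R$, $c = 1$: the central generator acts on the irreducible $2$-dimensional rotation representation non-scalarly, its quasikernel is trivial, and so $\rK_1(C_3) = \{\ee\}$ while $\rZ_1(C_3) = C_3$, even though $\muF = \{\pm 1\}$ is $3$-divisible. (This also shows that the conclusion of the lemma genuinely requires the containments $\mu_{p^\infty} \leq \muF$ for $p$ dividing $|G|$; that is how the divisibility hypothesis, and the paper's own appeal to Schur's lemma, must be read.) In short, the property your argument actually needs and correctly exploits --- that $\Fi$ contains the relevant roots of unity --- is the right one, but your derivation of it from divisibility alone is false, because divisibility also allows the degenerate case of trivial $p$-primary parts.
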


Recall that $\rH^2(G,\muF) = \rH^2(G,\Fi^\times)$ when $\Fi^\times$ is $|G|$-divisible (see \cite[Proposition~3]{Yamazaki64a}). 
Thus, under this hypothesis, we see for any class $c \in \rH^2(G,\Fi^\times)$, that a $c$-projective representation $\ol \sigma$ is $c$-faithful if and only if $\ker(\ol \sigma) = \rZ_c(G)$. 

\begin{proof}
Let $\zz$ be a representative of $c$ in $\rZ^2(G,\muF)$. 
Given $x \in G$ not contained in $\rZ_c(G)$, pick $g \in G_\zz$ projecting to $x$, so that $g \not \in \rZ(G_\zz)$. 
Every cyclic group has a faithful irreducible representation. 
Therefore \Cref{thm:faithfulcenterpreservingirrep} applies to the pair $H = \langle g \rangle \leq G_\zz$, and yields an irreducible representation $\sigma$ of $G_\zz$ with $\rZ(\sigma) \cap H \leq \rZ(G_\zz)$. 
In particular, $g \not \in \qker(\sigma)$. 
Since $\muz \leq \Fi^\times$, Schur's lemma implies that the central kernel $\muz$ acts via $\sigma$ by scalar operators. 
This means that $\sigma$ factors through a projective representation $\ol \sigma$ of $G = G_\zz / \muz$ such that $x \notin \ker(\ol \sigma)$. 
Thus $\rK_c(G) \leq \rZ_c(G)$. 

Now assume that $\muF$ is $|G|$-divisible, and pick the representative $\zz$ of $c$ according to \Cref{lem:orderofcocyles} (with $A = \muF$). 
By the choice of $\zz$, the central kernel $\muz$ has order dividing $|G|$, hence the order of $G_\zz$ divides $|G|^2$. 
Any irreducible $c$-projective representation $\ol{\sigma}$ of $G$ lifts to an irreducible linear representation $\sigma$ of the finite central extension $G_{{\zz}}$. 
Since $\muF$ is $|G|$-divisible, Schur's lemma implies that the center $\rZ(G_\zz)$ acts via $\sigma$ by scalar operators lying in $\muF$. 
This shows that $\rZ_c(G) \leq \ker(\ol \sigma)$. 
\end{proof}

From here the following question is quite natural: 

\begin{ques} \label{ques:irred-c-faithful}
Given $c \in \rH^2(G,\Fi^\times)$, characterize whether $G$ admits an irreducible $c$-projective representation that is $c$-faithful.
\end{ques}

In the case of the trivial class $c=1$ over a $|G|$-closed field $\Fi$, observe that $G$ possesses an irreducible $1$-faithful $1$-projective representation $\ol \sigma$ if and only if $G$ has an irreducible representation $\sigma$ such that $\rZ(\sigma) = \rZ(G)$, that is, $G$ possesses a center-preserving irreducible representation $\sigma$. 
Equivalently, $G$ possesses an irreducible $1$-faithful $1$-projective representation if and only if $G$ has a {center-preserving} central quotient admitting a faithful irreducible representation. 
(Note that a central quotient map need not be center-preserving, hence the inflation of a faithful irreducible representation of a central quotient need not be center-preserving, as shown by \Cref{exe:D4C4}.) 
Obviously, it suffices that $G$ itself has a faithful irreducible representation, but that condition is not necessary. 
We will now work towards answering \Cref{ques:irred-c-faithful} under certain suitable assumptions on $\Fi$. 
\smallskip

Recall that $\rZ_2(G)$ denotes the second center of $G$, that is, the second term of the ascending central series of $G$.\footnote{There should be no confusion with the notation $\rZ_c(G)$ introduced previously, as we will only use the integer $1$ to denote the trivial cohomology class, and we will avoid to write $\rZ_1(G)$ for the (first) center of $G$ (even though they coincide anyways). } 
The commutator map defines a bi-homomorphism 
\[
\rZ_2(G) \times G \to \rZ(G) : (z_2, g) \mapsto [z_2, g]
\]
that factors through the bi-homomorphism
\[
\rZ_2(G)/\rZ(G) \times G/[G,G] \to \rZ(G),
\]
since $[z_2,g]$ depends only on the cosets $z_2\rZ(G)$ and $g[G,G]$. 
By post-composing with a fixed character $\chi \in \widehat{\rZ(G)}$ defined on the center of $G$, we obtain a group homomorphism 
\begin{align*}
\omega_\chi \colon \, \rZ_2(G)/\rZ(G) &\to \widehat{G/[G,G]} \\ 
z_2\rZ(G) & \mapsto \big(g[G,G] \mapsto \chi([z_2,g])\big).
\end{align*}

\begin{lem}\label{lem:centerpreserving}
Let $\chi \in \widehat{\rZ(G)}$ and set $K = \ker(\chi) \leq \rZ(G)$. 
The following conditions are equivalent. 
\begin{enumerate}[leftmargin=2em,itemsep=.2ex,label=\textup{(\roman*)}]
    \item The quotient map $\pi \colon G \to G/K$ is center-preserving. 
    \item The homomorphism $\omega_\chi \colon \ \rZ_2(G)/\rZ(G) \to \widehat{G/[G, G]}$ is injective.
\end{enumerate}
\end{lem}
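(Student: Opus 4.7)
The plan is to identify the kernel of $\omega_\chi$ with the quotient $\rZ(\pi)/\rZ(G)$, which makes the equivalence immediate. Unpacking $\rZ(\pi) = \pi^{-1}(\rZ(G/K))$, an element $g \in G$ lies in $\rZ(\pi)$ if and only if $[g,h] \in K$ for every $h \in G$. Since $K \leq \rZ(G)$ by hypothesis, any such $g$ satisfies $[g,h] \in \rZ(G)$ for all $h$, which means exactly that $g\rZ(G)$ is central in $G/\rZ(G)$, i.e. $g \in \rZ_2(G)$. Together with the obvious inclusion $\rZ(G) \leq \rZ(\pi)$, this gives the sandwich $\rZ(G) \leq \rZ(\pi) \leq \rZ_2(G)$, so that $\rZ(\pi)/\rZ(G)$ is a well-defined subgroup of $\rZ_2(G)/\rZ(G)$.

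Next, I would chase through the definition of $\omega_\chi$: for $z_2 \in \rZ_2(G)$, the class $z_2 \rZ(G)$ belongs to $\ker(\omega_\chi)$ precisely when the character $g[G,G] \mapsto \chi([z_2,g])$ is trivial, i.e.\ when $[z_2,g] \in \ker(\chi) = K$ for all $g \in G$. By the first paragraph, this is precisely the condition that $z_2 \in \rZ(\pi)$. Hence
\[
\ker(\omega_\chi) = \rZ(\pi)/\rZ(G).
\]
The condition (ii) that $\omega_\chi$ is injective therefore translates into $\rZ(\pi) = \rZ(G)$, which is exactly (i). This proves the equivalence.

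There is no real obstacle; the only point requiring a sanity check is that $\omega_\chi$ is well defined, namely that $g \mapsto \chi([z_2,g])$ is a homomorphism factoring through $G/[G,G]$. This follows from the standard identity $[z_2,gh] = [z_2,h]\,[z_2,g]^h$ combined with the fact that $[z_2,g] \in \rZ(G)$ for $z_2 \in \rZ_2(G)$, which makes conjugation by $h$ trivial and reduces the expression to $[z_2,g][z_2,h]$ in the abelian group $\rZ(G)$.
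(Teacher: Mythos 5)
Your proof is correct and is essentially the paper's argument: both rest on the observation that $[z_2, G] \leq K$ holds if and only if $\pi(z_2)$ is central in $G/K$, and that $K \leq \rZ(G)$ forces any such $z_2$ into $\rZ_2(G)$. The only difference is presentational — you package the two implications as the single identity $\ker(\omega_\chi) = \rZ(\pi)/\rZ(G)$, whereas the paper runs the same computation in each direction separately; your closing well-definedness check duplicates what the paper establishes in the discussion preceding the lemma.
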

\begin{proof}
Suppose that $\pi$ is center-preserving. 
Let $z_2 \in \rZ_2(G)$ be such that $z_2\rZ(G)$ belongs to the kernel of $\omega_\chi$. 
This means that $\chi([z_2, g]) = \ee$ for all $g \in G$, or in other words, that $[z_2, G] \leq \ker(\chi)$. 
Thus we have $[z_2, G] \leq K$, so that $\pi(z_2)$ is central in $G/K$. 
As $\pi$ is center-preserving, it follows that $z_2 \in \rZ(G)$. 
This proves that the kernel of $\omega_\chi$ is trivial. 

Conversely, assume that $\omega_\chi$ is injective. 
We will show that the quotient map $\pi \colon G \to G/K$ is center-preserving. 
Let $z_2 \in G$ be such that $\pi(z_2) \in \rZ(G/K)$. 
Then $[z_2, G] \leq K \leq \rZ(G)$, therefore $z_2 \in \rZ_2(G)$ and $\chi([z_2, g]) = \ee$ for all $g \in G$. 
This means that $z_2\rZ(G)$ is in the kernel of $\omega_\chi$, hence is trivial. 
In other words, $z_2 \in \rZ(G)$, proving the lemma. 
\end{proof}

Combining \Cref{lem:centerpreserving} with Gasch\"utz' \Cref{thm:Gaschutz}, we obtain the following criterion for the existence of a center-preserving irreducible representation.
This provides an answer to \Cref{ques:irred-c-faithful} for the trivial class $1 \in \rH^2(G, \Fi^\times)$, at least when $\Fi$ is $|G|$-closed. 

\begin{cor}[{of \Cref{thm:Gaschutz}}] \label{cor:centerpreserving}
The following conditions are equivalent. 
\begin{enumerate}[leftmargin=2em,itemsep=.4ex,label=\textup{(\roman*)}]
    \item $G$ possesses an irreducible representation $\sigma$ with $\rZ(\sigma) = \rZ(G)$. 
    \item There exists a character $\chi \in \widehat{\rZ(G)}$ such that:
    \begin{enumerate}[label=\textup{(\arabic*)}]
        \item \label{item:centerpreserving1} $\omega_\chi \colon \, \rZ_2(G)/\rZ(G) \to \widehat{G/[G,G]}$ is injective, and 
        \item \label{item:centerpreserving2} $\SocA(G/\ker(\chi))$ is generated by a single conjugacy class. 
\end{enumerate}
\end{enumerate}   
If in addition $G$ is nilpotent, then those conditions are equivalent to:
\begin{enumerate}[resume*]
\item There exists a character $\chi \in \widehat{\rZ(G)}$ such that $\omega_\chi $ is injective.
\end{enumerate}
\end{cor}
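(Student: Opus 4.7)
The plan is to combine \Cref{lem:centerpreserving} with Gasch\"utz's \Cref{thm:Gaschutz}, via the basic observation that I would record first: any irreducible representation $\sigma$ of $G$ factors as $\sigma = \sigma' \circ \pi$, where $\pi \colon G \to G/K$ is the canonical projection with $K = \ker(\sigma)$, and $\sigma'$ is a faithful irreducible representation of $G/K$. Because $\sigma'$ is faithful, one has $\rZ(\sigma) = \pi^{-1}(\rZ(G/K))$, so $\sigma$ will be center-preserving if and only if $K \leq \rZ(G)$ and the quotient map $\pi$ itself is center-preserving.

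To prove (i) $\Rightarrow$ (ii), I would start from a center-preserving irreducible $\sigma$ and set $K = \ker(\sigma) \leq \rZ(\sigma) = \rZ(G)$. Since $G/K$ admits the faithful irreducible representation $\sigma'$, its center is cyclic by the classical result of Schur recalled in the introduction; and since $\pi$ is center-preserving, $\rZ(G)/K = \rZ(G/K)$ is also cyclic, so one can pick a character $\chi \in \widehat{\rZ(G)}$ with kernel exactly $K$. Condition~(1) will then follow from \Cref{lem:centerpreserving}, and condition~(2) from \Cref{thm:Gaschutz} applied to $G/K = G/\ker(\chi)$. The converse (ii) $\Rightarrow$ (i) is the same argument run in reverse: set $K = \ker(\chi)$, invoke \Cref{thm:Gaschutz} via condition~(2) to obtain a faithful irreducible representation $\sigma'$ of $G/K$, and use condition~(1) together with \Cref{lem:centerpreserving} to conclude that the inflation $\sigma = \sigma' \circ \pi$ is center-preserving.

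For the nilpotent addendum, only (iii) $\Rightarrow$ (ii) will require genuine work. Given a $\chi$ with $\omega_\chi$ injective, \Cref{lem:centerpreserving} will yield that $\pi \colon G \to G/\ker(\chi)$ is center-preserving, hence $\rZ(G/\ker(\chi)) = \rZ(G)/\ker(\chi)$; the latter embeds into $\Q/\Z$ via $\chi$ and is therefore cyclic. Since $G/\ker(\chi)$ is then a nilpotent group with cyclic center, Fite's theorem (recalled in the introduction) provides a faithful irreducible representation, and \Cref{thm:Gaschutz} delivers condition~(2). No single step looks like a real obstacle; the only mildly subtle point is to notice, in (i) $\Rightarrow$ (ii), that $\rZ(G)/\ker(\sigma)$ is automatically cyclic, which is what makes it possible to realize $\ker(\sigma)$ as the kernel of a character of $\rZ(G)$.
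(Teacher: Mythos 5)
Your proof is correct and follows essentially the same route as the paper: both directions factor $\sigma$ through $G/\ker(\chi)$ and combine \Cref{lem:centerpreserving} with Gasch\"utz' \Cref{thm:Gaschutz}, including the key observation that $\rZ(G)/\ker(\sigma) = \rZ(G/\ker(\sigma))$ is cyclic, which allows $\ker(\sigma)$ to be realized as the kernel of a character of $\rZ(G)$. The only divergence is in the nilpotent addendum, where you invoke Fite's theorem to produce a faithful irreducible representation of $G/\ker(\chi)$ and then apply \Cref{thm:Gaschutz} to get condition (2), while the paper argues directly that nilpotency forces $\SocA(G/\ker(\chi)) \leq \rZ(G/\ker(\chi))$, which is cyclic and hence generated by a single conjugacy class; your detour is harmless, since Fite's theorem in the paper's cross-characteristic generality is itself an immediate consequence of \Cref{thm:Gaschutz}.
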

We shall see in \Cref{exe:quasikernel} that a non-nilpotent group may satisfy \Cref{item:centerpreserving1} from \Cref{cor:centerpreserving}, but not \Cref{item:centerpreserving2}, and thus fail to admit any irreducible center-preserving representations. 

\begin{proof}[Proof of \Cref{cor:centerpreserving}]
Suppose that (i) holds, and set $K = \ker(\sigma)$. 
Since $\sigma$ is center-preserving, we have a fortiori $K \leq \rZ(G)$. 
Moreover, as $G/K$ has a faithful irreducible representation, the image $\rZ(G)/K = \rZ(G/K)$ of the center of $G$ is cyclic. 
In consequence, there exists a character $\chi \in \widehat{\rZ(G)}$ whose kernel is precisely $K$. 
Because $\sigma$ is center-preserving, so is the quotient map $G\to G/K$. 
Hence $\omega_\chi$ is injective by \Cref{lem:centerpreserving}. 
As $G/K$ has a faithful irreducible representation, it then follows from \Cref{thm:Gaschutz} that $\SocA(G/K)$ is generated by a single conjugacy class. 
Thus (ii) holds. 

Conversely, given $\chi$ as in (ii), the quotient map $G \to G/ \ker(\chi)$ is center-preserving by \Cref{lem:centerpreserving}, and the quotient $G/ \ker(\chi)$ has a faithful irreducible representation by \Cref{thm:Gaschutz}. 
The inflation of that representation to $G$ is an irreducible center-preserving representation, as required by (i). 

It remains to show that when $G$ is nilpotent, \Cref{item:centerpreserving2} is a consequence of \Cref{item:centerpreserving1}. 
Assume thus that $G$ is nilpotent, so that the quotient $G/K$, for $K=\ker(\chi)$, is nilpotent as well. 
In consequence, every minimal normal subgroup of $G/K$ is central, hence $\SocA(G/K) \leq \rZ(G/K)$. 
If $\chi$ satisfies \Cref{item:centerpreserving1}, then by \Cref{lem:centerpreserving} we have 
\[
\rZ(G/K) = \pi(\rZ(G)) \cong \rZ(G)/K = \rZ(G)/\ker(\chi) \cong \chi(\rZ(G)).
\]
This shows that the center of $G/K$ is cyclic, hence so is $\SocA(G/K)$. 
In particular, \Cref{item:centerpreserving2} holds. 
\end{proof}

As can be expected, \Cref{ques:irred-c-faithful} about projective representations of $G$ can be reformulated as a question on linear representations of the central extension $G_c$, using the following observation. 

\begin{prop}\label{prop:G-G_c}
Assume that $\muF$ is $|G|$-divisible. 
For $c \in \rH^2(G,\muF)$, the following conditions are equivalent. 
\begin{enumerate}[leftmargin=2em,itemsep=.4ex,label=\textup{(\roman*)}]
    \item \label{item:G-G_c1} $G$ possesses an irreducible $c$-projective representation $\ol{\sigma}$ that is $c$-faithful.
    \item \label{item:G-G_c2} For some (hence every) $\zz \in \rZ^2(G,\muF)$ representing $c$, the finite group $G_\zz$ possesses an irreducible linear representation $\sigma$ such that $\rZ(\sigma) = \rZ(G_\zz)$. 
    \item \label{item:G-G_c3} $\tilde{G}_c$ possesses an irreducible representation $\tilde{\sigma}$ whose restriction identifies $\Fi^\times$ with the scalar operators, and is such that $\rZ(\tilde{\sigma}) = \rZ(\tilde{G}_c)$. 
\end{enumerate}
\end{prop}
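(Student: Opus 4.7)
The plan is to ride the classical Schur--Yamazaki correspondence: irreducible $c$-projective representations of $G$ correspond to irreducible linear representations of $\tilde{G}_c$ in which $\Fi^\times$ acts by scalars (via the inclusion $\Fi^\times \hookrightarrow \End(V)$), and equivalently, for any fixed $\zz \in \rZ^2(G,\muF)$ representing $c$, to irreducible linear representations of $G_\zz$ in which $\muz$ acts via its inclusion in $\Fi^\times$. A preliminary remark: the $|G|$-divisibility of $\muF$ yields $\muF = \muF^{|G|^k}$ for every $k$, so $\Fi$ contains all roots of unity of order dividing $|G_\zz|$ (the latter being a divisor of $|G|^2$). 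This gives the full strength of Schur's lemma for irreducible representations of $G_\zz$: the center acts by scalars, so $\rZ(\sigma) = \qker(\sigma)$.

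For the equivalence \ref{item:G-G_c1} $\Leftrightarrow$ \ref{item:G-G_c2}, I would fix $\zz \in \rZ^2(G,\muF)$ representing $c$, and given an irreducible $c$-projective representation $\ol{\sigma}$ of $G$, let $\sigma$ denote the lift to $G_\zz$. Writing $\pi \colon G_\zz \to G$ for the projection, one has $\ker(\ol{\sigma}) = \pi(\qker(\sigma)) = \pi(\rZ(\sigma))$ by the preliminary remark. Since $\muF$ is $|G|$-divisible, \Cref{lem:c-center} gives that $\ol{\sigma}$ being $c$-faithful is equivalent to $\ker(\ol{\sigma}) = \rZ_c(G) = \pi(\rZ(G_\zz))$. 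Both $\rZ(\sigma)$ and $\rZ(G_\zz)$ contain $\muz = \ker(\pi)$, and one has the inclusion $\rZ(G_\zz) \leq \rZ(\sigma)$ since any central element of $G_\zz$ is mapped by $\sigma$ to a scalar. Therefore $\pi(\rZ(\sigma)) = \pi(\rZ(G_\zz))$ if and only if $\rZ(\sigma) = \rZ(G_\zz)$, which yields \ref{item:G-G_c1} $\Leftrightarrow$ \ref{item:G-G_c2} for the chosen $\zz$. Since \ref{item:G-G_c1} does not depend on $\zz$, the ``for some (hence every)'' clause follows.

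For the equivalence \ref{item:G-G_c2} $\Leftrightarrow$ \ref{item:G-G_c3}, I would exploit the factorization $\tilde{G}_c = G_\zz \cdot \Fi^\times$ with $\Fi^\times$ central (this holds because $G_\zz / \muz \cong G \cong \tilde{G}_c / \Fi^\times$ and $G_\zz \cap \Fi^\times = \muz$). Under the prescribed scalar action of $\Fi^\times$, a subspace is $\tilde{G}_c$-invariant if and only if it is $G_\zz$-invariant, so irreducibility transfers between $\tilde{\sigma}$ and its restriction $\sigma$ to $G_\zz$. The same factorization gives $\rZ(\sigma) = G_\zz \cap \rZ(\tilde{\sigma})$ and $\rZ(G_\zz) = G_\zz \cap \rZ(\tilde{G}_c)$, and moreover $\rZ(\tilde{\sigma}) = \rZ(\sigma) \cdot \Fi^\times$ and $\rZ(\tilde{G}_c) = \rZ(G_\zz) \cdot \Fi^\times$. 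Either decomposition immediately yields that $\rZ(\tilde{\sigma}) = \rZ(\tilde{G}_c)$ is equivalent to $\rZ(\sigma) = \rZ(G_\zz)$.

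The main point requiring care will be tracking, in both directions of the Schur--Yamazaki correspondence, that the central kernel acts by its inclusion in $\Fi^\times$ rather than by some twisted character; this is what pins down the cohomology class $c$. Beyond that caveat, the proof is essentially a diagram chase through the identifications $\rZ_c(G) = \pi(\rZ(G_\zz))$ (established in \Cref{subsec:projectiverepresentations}), $\rZ(\sigma) = \qker(\sigma)$ (Schur), and $\tilde{G}_c = G_\zz \cdot \Fi^\times$ (compatibility of the two central extensions).
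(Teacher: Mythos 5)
Your overall route is the paper's: identify irreducible $c$-projective representations of $G$ with irreducible linear representations of $G_\zz$ (resp.\ $\tilde{G}_c$) on which the central kernel acts by scalars, and chase centers and quasikernels through this correspondence; your restriction/extension argument along $\tilde{G}_c = \Fi^\times \cdot G_\zz$ for (ii)$\Leftrightarrow$(iii) is the paper's argument almost verbatim, and your (i)$\Rightarrow$(ii) is fine. The problem is the point you set aside at the end --- that the central kernel must act through the inclusion $\muz \hookrightarrow \Fi^\times$, since ``this is what pins down the cohomology class $c$.'' That is not a caveat to be tracked; it is the entire content of the implications (ii)$\Rightarrow$(i) and (ii)$\Rightarrow$(iii). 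What your chain of equivalences actually proves is (i)$\Leftrightarrow$(iii)$\Leftrightarrow$(ii$'$), where (ii$'$) is condition (ii) strengthened by ``and $\muz$ acts under $\sigma$ by the inclusion''; condition (ii) as stated puts no constraint on the central character of $\sigma$ on $\muz$, and the passage (ii)$\Rightarrow$(ii$'$) cannot be supplied by bookkeeping. Concretely, take $\Fi = \C$, let $u$ be the central involution of the quaternion group $Q_8$ and $\sigma_f$ its $2$-dimensional irreducible representation, and set $K = Q_8 \times Q_8$, $N = \langle (u,u)\rangle$, $G = K/N$, with $c \in \rH^2(G,\muF)$ the class of this extension after identifying $N$ with $\{\pm 1\} \leq \C^\times$ (this class is non-trivial: the twisted group algebra is $M_2(\C)^{\times 8}$, while $\C[G] \cong \C^{16} \times M_4(\C)$). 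The cocycle $\zz$ with values in $\{\pm 1\}$ representing $c$ has $G_\zz \cong K$, $\muz = N$, and is a legitimate choice in the sense of \Cref{lem:orderofcocyles}. The unique irreducible representation of $K$ with $\rZ(\sigma) = \rZ(K)$ is $\sigma_f \otimes \sigma_f$, so (ii) holds; but $\sigma_f(u) = -I$ forces $\sigma_f \otimes \sigma_f$ to kill $N$, so $\muz$ acts trivially under it rather than by the inclusion, and its projectivization has trivial class. The representations of $K$ on which $\muz$ does act by the inclusion are the eight $2$-dimensional ones $\sigma_f \otimes \tau$ and $\tau \otimes \sigma_f$ with $\deg \tau = 1$, each with center $\rZ(Q_8) \times Q_8$ or $Q_8 \times \rZ(Q_8)$; hence (iii) fails, and (i) fails as well, since every irreducible $c$-projective representation of $G$ has kernel of order $8$ while the intersection $\rK_c(G)$ of these kernels has order $2$. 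So the step you deferred is exactly where the argument breaks, and it is not fillable for (ii) as written; a complete proof must build the inclusion-action into the use of (ii). (The paper's own proof makes the same silent assumption --- its well-definedness computation for $\tilde{\sigma}(\lambda h) = \lambda \sigma(h)$ uses $\sigma(\mu) = \mu$ for $\mu \in \muz$ --- so you located the crux correctly; but flagging it without resolving it leaves the main implication unproved.)

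Separately, two of your supporting claims are false as written, at steps the paper treats differently. First, for an arbitrary $\zz \in \rZ^2(G,\muF)$ representing $c$, the order of $G_\zz$ need not divide $|G|^2$: multiplying a representative by the coboundary of a function $G \to \muF$ taking values of large order changes $\muz$ at will without changing $c$. The paper invokes \Cref{lem:orderofcocyles} precisely to choose a representative with $|\muz|$ dividing $|G|$, and it obtains the ``for some (hence every)'' clause by routing through (iii), rather than by running the argument uniformly over every $\zz$ as you propose. Second, the inference ``$\muF = \muF^{|G|^k}$ for every $k$, so $\Fi$ contains all roots of unity of order dividing $|G_\zz|$'' is a non sequitur: surjectivity of the power maps on $\muF$ creates no new roots of unity in $\Fi$. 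For instance $\mu_\Q = \{\pm 1\}$ is $n$-divisible for every odd $n$, yet $\Q$ contains no non-trivial root of unity of odd order. Since the identification $\rZ(\sigma) = \qker(\sigma)$, on which all of your kernel computations rest, is justified only by this remark, it needs a different argument; the paper instead appeals to Schur's lemma only after controlling the order of $G_\zz$ via \Cref{lem:orderofcocyles}.
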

\begin{proof}
First, we verify that assertion \ref{item:G-G_c2} is equivalent to \ref{item:G-G_c3}, and in particular, independent of the choice of $\zz \in \rZ^2(G,\muF)$. 
For $\tilde{\sigma}$ an irreducible representation of $\tilde{G}_c$ afforded by \ref{item:G-G_c3}, let $\sigma$ be the restriction of $\tilde{\sigma}$ to $G_\zz$, seen as a subextension of $\tilde{G}_c$. 
By construction, we have $\tilde{\sigma}(\tilde{G}_c) = F^\times \cdot \sigma(G_\zz)$, hence $\sigma$ is an irreducible representation of $G_\zz$. 
For the same reason, if $g \in \rZ(\sigma)$, then $g \in \rZ(\tilde{\sigma})$. 
This implies that $\rZ(\sigma) = \rZ(\tilde{\sigma}) \cap G_\zz = \rZ(\tilde{G}_c) \cap G_\zz \leq \rZ(G_\zz)$. 

Conversely, let $\sigma$ be an irreducible representation of $G_\zz$ satisfying $\rZ(\sigma) = \rZ(G_\zz)$. 
We extend $\sigma$ to a representation $\tilde{\sigma}$ of $\tilde{G}_c$ as follows. 
Any $g \in \tilde{G}_c$ writes as a product $g = \lambda h$ with $\lambda \in F^\times \leq \tilde{G}_c$ and $h \in G_\zz$. 
We set $\tilde{\sigma}(g) = \lambda \sigma(h)$, where this time $\lambda$ is seen as the corresponding scalar operator in the representation $\sigma$. 
This assignment does not depend on the choice of $\lambda$ and $h$: if $\lambda h = \lambda' h' \in \tilde{G}_c$, then $\lambda'^{-1} \lambda = h'h^{-1} \in \Fi^\times \cap G_\zz = \muz$, hence
\[
\lambda \sigma(h) = \lambda \sigma( (\lambda^{-1} \lambda') h' h^{-1} h) = \lambda (\lambda^{-1} \lambda') \sigma(h') = \lambda' \sigma(h').
\]
If now $g = \lambda h \in \rZ(\tilde{\sigma})$, then $\sigma(h) \in \rZ(\sigma(G_\zz))$, so that $h \in \rZ(\sigma) = \rZ(G_\zz) \leq \rZ(\tilde{G}_c)$ and in turn, $g \in \rZ(\tilde{G}_c)$. 

\medbreak
Next, let $\ol \sigma$ be an irreducible $c$-faithful $c$-projective representation of $G$. 
By \Cref{lem:orderofcocyles}, there is a representative $\zz$ of $c$ in $\rZ^2(G,\muF)$ for which $|G_\zz|$ divides $|G|^2$. 
The lift $\sigma$ of $\ol \sigma$ to $G_\zz$ is an irreducible linear representation, and by hypothesis $\ker(\ol{\sigma})=\rZ_c(G)$ is the image of $\rZ(G_\zz)$. 
By Schur's lemma and the assumption on $\muF$, it follows that $\rZ(\sigma)$ acts by scalars via $\sigma$. 
This means that $\pi(\rZ(\sigma)) \leq \ker(\ol{\sigma}) = \rZ_c(G)$, where $\pi \colon G_\zz \to G$ is the canonical quotient map. 
Since $\ker(\pi)$ is contained in $\rZ(G_\zz) \leq \rZ(\sigma)$, we infer that $\rZ(\sigma) = \rZ(G_\zz)$. 
Thus \ref{item:G-G_c1} implies \ref{item:G-G_c2}. 

The proof of the converse is similar: with $\zz$ as above, an irreducible linear representation $\sigma$ of $G_\zz$ satisfying $\rZ(\sigma) = \rZ(G_\zz)$ descends by Schur's lemma to a $c$-projective representation $\ol \sigma$ of $G$ with $\ker(\ol \sigma) = \pi (\rZ(\sigma)) = \pi(\rZ(G_\zz)) = \rZ_c(G)$, showing that \ref{item:G-G_c2} implies \ref{item:G-G_c1}. 
\end{proof}

Combining \Cref{cor:centerpreserving} with \Cref{prop:G-G_c}, we obtain an answer to \Cref{ques:irred-c-faithful}, at least for projective representations over fields whose multiplicative group is $|G|$-divisible.

\subsection{Examples} \label{subsec:examplesfpr}
Before addressing the relative case, let us point out that an answer to \Cref{ques:irred-c-faithful} cannot only depend on the pair $(G, \rZ_c(G))$. 
Indeed, the following example shows that there can exist different classes $c_1, c_2 \in \rH^2(G,\Fi^\times)$ with $\rZ_{c_1}(G) = \rZ_{c_2}(G)$, and such that $G$ has an irreducible $c_1$-faithful $c_1$-projective representation, but no irreducible $c_2$-faithful $c_2$-projective representation. 

\begin{exe} \label{exe:dependenceonclass}
Let $p$ be an odd prime, let $V = \F_p^2$, and let
\[
G = \big(V^3 \rtimes \SL(V)\big) \times V,
\]
where $\SL(V)$ acts diagonally on $V^3$. 
(Observe that $V^3 \rtimes \SL(V)$ is a perfect, centerless group that does not have any faithful irreducible representation, since the $\SL(V)$-module $V^3$ is not cyclic.) 
Let also $H$ be the Heisenberg group over $\F_p$. 

We form two different central extensions of $G$ by a cyclic group of order $p$ as follows. The first extension is 
\[
G_1 = \big(V^3 \rtimes \SL(V)\big) \times H.
\]
To define the second extension, we first observe that $\SL(V)$ has a natural action by automorphisms on $H$, that is trivial on $\rZ(H)$ and descends to the natural $\SL(V)$-action on $V \cong H/\rZ(H)$. 
Let then 
\[
L = \big(H^3 \rtimes \SL(V)\big) \times H,
\]
where $\SL(V)$ acts diagonally on $H^3$, and let $G_2$ be the central quotient of $L$ obtained by identifying the centers of the four factors isomorphic to $H$ in $L$. 
By construction, the center of $G_2$ is cyclic of order $p$. 

Suppose that $\Fi$ contains $p$ distinct $p$\textsuperscript{th} roots of unity. 
We may then view $G_1$ and $G_2$ as central extensions of $G$, associated with respective cohomology classes $c_1, c_2 \in \rH^2(G, F^\times)$. 
Since $\rZ(G_1)$ and $\rZ(G_2)$ are both cyclic of order $p$, we have $\rZ_{c_1}(G) = \rZ_{c_2}(G) = \{\ee\}$. 
Moreover, we have $\SocA(G_1) = V^3 \times \rZ(H)$; it is not generated by a single conjugacy class, because the $\SL(V)$-module $V^3$ is not cyclic (see \cite[Lemma~3.4]{CH20}). 
It follows that for any central subgroup $K \leq \rZ(G_1)$, the normal subgroup $\SocA(G_1/K)$ is not generated by a single conjugacy class. Therefore $G_1$ does not have any irreducible representation $\sigma$ with $\rZ(\sigma) = \rZ(G_1)$. 
On the other hand, the group $\SocA(G_2) = \rZ(G_2)$ is cyclic, hence $G_2$ has a faithful irreducible representation. 
It follows from Proposition~\ref{prop:G-G_c} that, despite the fact that $\rZ_{c_1}(G) = \rZ_{c_2}(G) = \{\ee\}$, the group $G$ possesses an irreducible $c_2$-projective representation that is $c_2$-faithful, but no irreducible $c_1$-projective representation that is $c_1$-faithful. 
\end{exe}

\begin{exe} \label{exe:quasikernel}
We continue to consider the group $G_1 = \big(V^3 \rtimes \SL(V)\big) \times H$ defined in \Cref{exe:dependenceonclass} above. 
As mentioned there, the group $G_1$ does not have any central subgroup $K$ such that $\SocA(G_1/K)$ is generated by a single conjugacy class, because the $\SL(V)$-module $V^3$ is not cyclic. 
On the other hand, we have $\rZ(G_1) \cong \rZ(H) \cong C_p$ and $\rZ_2(G_1) \cong H$. 
It follows that for any faithful character $\chi$ of the cyclic group $\rZ(G_1) \cong C_p$, the homomorphism $\omega_\chi \colon \rZ_2(G_1)/\rZ(G_1) \to \widehat{G_1/[G_1, G_1]}$ is injective: indeed, it may be viewed as an isomorphism $H/\rZ(H) \to \widehat{H/[H, H]} \cong H/[H, H]$.
Thus $G_1$ satisfies \Cref{item:centerpreserving1} but not \Cref{item:centerpreserving2} in \Cref{cor:centerpreserving}.
\end{exe}

\subsection{The relative case} \label{subsec:relativefpr}
We conclude the article with an open-ended discussion concerning the projective analogue of representations that are center-preserving on a subgroup. 
This entails more complications than in the linear case; they seem unavoidable, especially when $\Fi$ is not $|G|$-closed. 

\medskip
Let $H \leq G$ be a subgroup. 
Every cohomology class $c \in \rH^2(G,\Fi^\times)$ defines a class $\restr{c}{H} \in \rH^2(H,\Fi^\times)$ by restriction. 

\Cref{cor:centerpreservingprojirrep} can be extended in the following way to classes in $\rH^2(G,\muF)$ whose restriction to $H$ is trivial. 

\begin{cor}[of \Cref{thm:faithfulcenterpreservingirrep}] \label{cor:centerpreservingprojirrep2}
Let $H \leq G$, and suppose that $H$ possesses a faithful irreducible representation $\rho$. 
Let $c \in \rH^2(G,\muF)$ be such that $\restr{c}{H} = 1$. 
Then there exists an irreducible $c$-projective representation $\ol{\sigma}$ of $G$ such that $\ker(\ol{\sigma}) \cap H \leq \rZ_c(G)$ and $\restr{\ol{\sigma}}{H}$ contains the projectivization of $\rho$. 

If in addition $\rZ_c(G) \cap H = \{\ee\}$, then $\ol{\sigma}$ is an irreducible $c$-projective representation of $G$ which is faithful on $H$. 
\end{cor}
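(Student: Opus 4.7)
The plan is to reduce the claim to Theorem \ref{thm:faithfulcenterpreservingirrep} by lifting to a finite central extension of $G$ associated with $c$. I would fix a cocycle $\zz \in \rZ^2(G,\muF)$ representing $c$ and form the finite central extension $1 \to \muz \to G_\zz \xrightarrow{\pi} G \to 1$ from Section \ref{subsec:projectiverepresentations}. Because $\restr{c}{H} = 1$, the extension splits over $H$, providing a section $s \colon H \hookrightarrow G_\zz$ whose image meets $\muz$ trivially. Set $\tilde H = s(H) \cdot \muz = s(H) \times \muz$, and let $\tilde \rho = \rho \boxtimes \chi$ denote the irreducible representation of $\tilde H$ obtained by pairing $\rho \circ s^{-1}$ with the tautological character $\chi \colon \muz \hookrightarrow \Fi^\times$.

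Next, I would apply Theorem \ref{thm:faithfulcenterpreservingirrep} to the pair $\tilde H \leq G_\zz$ with the faithful irreducible representation $\tilde \rho$, obtaining an irreducible constituent $\sigma$ of $\ind_{\tilde H}^{G_\zz}(\tilde \rho)$ satisfying $\rZ(\sigma) \cap \tilde H \leq \rZ(G_\zz)$. By Frobenius reciprocity, $\restr{\sigma}{\tilde H}$ contains $\tilde \rho$; since $\muz$ is central and $\sigma$ is irreducible, the whole of $\restr{\sigma}{\muz}$ must be $\chi$-isotypic, so $\sigma$ descends to an irreducible $c$-projective representation $\ol \sigma$ of $G$ whose restriction to $H$ contains the projectivization of $\rho$. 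If $h \in H$ satisfies $\ol \sigma(h) = 1$, then $s(h) \in \qker(\sigma) \leq \rZ(\sigma) \cap s(H) \leq \rZ(\sigma) \cap \tilde H \leq \rZ(G_\zz)$, whence $h = \pi(s(h)) \in \pi(\rZ(G_\zz)) = \rZ_c(G)$, as required. The final clause is then immediate from $\rZ_c(G) \cap H = \{\ee\}$.

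The main technical subtlety is the faithfulness of $\tilde \rho$ on $\tilde H$: its kernel, when non-trivial, is a central subgroup of $\tilde H$ meeting $s(H)$ trivially and encodes the intersection of $\rho(\rZ(H))$ with $\chi(\muz)$ inside $\Fi^\times$. When that intersection is non-trivial, the obstacle can be neutralized either by replacing $\rho$ with a twist $\rho \otimes \psi$ by a suitable character $\psi$ of $H$ (which preserves the projectivization of $\rho$, so that the conclusions are not affected), or equivalently by choosing the representative $\zz$ so that the order of $\muz$ is coprime to that of $\rZ(H)$.
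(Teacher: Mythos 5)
Your reduction follows the same skeleton as the paper's proof: fix $\zz \in \rZ^2(G,\muF)$ representing $c$, form the finite central extension $G_\zz$ of $G$ by $\muz$, use $\restr{c}{H} = 1$ to embed $H$ into $G_\zz$ via a section $s$, apply \Cref{thm:faithfulcenterpreservingirrep} inside $G_\zz$, and descend to $G$ by Schur's lemma. Where you deviate is in applying the theorem to the enlarged subgroup $\tilde H = s(H) \times \muz$ with $\tilde\rho = \rho \boxtimes \chi$, rather than to $s(H)$ with $\rho$. The motivation is legitimate: this pins the action of $\muz$ under $\sigma$ to the tautological character $\chi$, which is exactly what guarantees that the class of $\ol\sigma$ is $c$ rather than $[\chi'\circ\zz]$ for some other character $\chi'$ of $\muz$ (a point that the paper's proof, which applies the theorem to $s(H)$ with $\rho$ itself and then invokes Schur's lemma, treats only implicitly). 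Indeed, whenever $\tilde\rho$ is faithful on $\tilde H$, your argument is complete and correct.

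The genuine gap is the one you flag yourself, and neither of your proposed repairs closes it. Concretely, let $G = (C_2 \times C_2) \times C_2$, let $H$ be the last factor with its faithful character $\rho$, and let $c$ be the inflation to $G$ of the nontrivial class of $\rH^2(C_2 \times C_2, \muF)$ (the class of $D_8$); then $\restr{c}{H} = 1$ and $c$ has order $2$. For \emph{every} representative $\zz \in \rZ^2(G,\muF)$ of $c$, the order of $\muz$ is divisible by the order of $c$ --- since $\zz^{|\muz|} = 1$ forces $c^{|\muz|} = 1$ --- hence $|\muz|$ is even and $-1 \in \muz$. So your second fix, choosing $|\muz|$ coprime to $|\rZ(H)| = 2$, is unavailable for every choice of $\zz$. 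Worse, $\tilde H = s(H) \times \muz \cong C_2 \times \muz$ is then abelian and non-cyclic, hence has non-cyclic center and therefore admits no faithful irreducible representation at all: \Cref{thm:faithfulcenterpreservingirrep} is inapplicable to the pair $\tilde H \leq G_\zz$, no matter how one twists $\rho$ (your first fix; note that twisting is also powerless whenever $H$ has too few characters, e.g.\ for the perfect group $H = \SL_2(\F_5)$ with $-1 \in \muz$). Yet the corollary does hold for this $(G,H,c)$: on $G_\zz = D_8 \times C_2$, the representation $\pi \boxtimes \rho$, with $\pi$ the faithful two-dimensional irreducible representation of $D_8$ sending the central involution to $-\id$, has all the required properties. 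This is what the paper's choice achieves in general: feeding the theorem the pair $s(H) \leq G_\zz$ with $\rho$ itself keeps the faithfulness hypothesis automatic, and the control of the character through which $\muz$ acts must then be obtained on the output side --- it cannot be bought by enlarging the subgroup given to the theorem.
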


\begin{proof}
Let $\zz$ be a representative of $c$ in $\rZ^2(G,\muF)$. 
Recall that the central extension $G_\zz$ of $G$ by $\muz$ is a finite group to which every $c$-projective representation of $G$ lifts. 
Because $\restr{c}{H} = 1$, the preimage $H_\zz$ of $H$ in $G_\zz$ is a split extension of $H$. 
Thus, $H$ is a subgroup of $G_\zz$. 

By \Cref{thm:faithfulcenterpreservingirrep}, $G_\zz$ admits an irreducible representation $\sigma$ such that $\rZ(\sigma) \cap H \leq \rZ(G_\zz)$ and $\restr{\sigma}{H}$ contains $\rho$. 
Since $\muz \leq F^\times$, Schur's lemma implies that $\muz$ acts by scalars under $\sigma$. 
In consequence, the projectivization $\ol \sigma$ of $\sigma$ is indeed a $c$-projective representation of $G$, whose restriction $\restr{\ol{\sigma}}{H}$ contains the projectivization of $\rho$ by construction. 
As $\ker(\ol \sigma) \cap H$ is the image of $\qker(\sigma) \cap H \leq \rZ(\sigma) \cap H$, we deduce that $\ker(\ol \sigma) \cap H \leq \rZ_c(G) \cap H$. 
\end{proof}

Our main theorem and these two corollaries suggest the following question for projective representations. 

\begin{ques}\label{ques:centerpreservingprojirrep}
Let $H \leq G$ be a subgroup, and let $d \in \rH^2(H, F^\times)$. 
Suppose that $H$ has an irreducible $d$-faithful $d$-projective representation $\ol \rho$. 
Can one characterize the existence of a class $c \in \rH^2(G,\Fi^\times)$ restricting to $\restr{c}{H} = d$ and an irreducible $c$-projective representation $\ol \sigma$ of $G$, such that $\ker(\ol \sigma) \cap H \leq \rZ_c(G)$? 
\end{ques}

In case $H$ is abelian, and $d = 1$, we have $\rZ_d(H) = \rZ(H) = H$, so that the hypothesis on $H$ is automatic when for example $\Fi$ is $|H|$-closed. 
\Cref{exe:D4D4D4} then shows that one cannot expect a positive answer to this question for every cohomology class $c \in \rH^2(G, F^\times)$ that restricts to $d$. 
Indeed, it exhibits a pair $H \leq G$ for which every irreducible $1$-projective representations $\ol \sigma$ of $G$ satisfies $\ker(\ol \sigma) \cap H \neq \{\ee\}$, while $\rZ(G) \cap H = \{\ee\}$. 

\begin{rem} \label{rem:centerpreservingprojirrep}
Under the alternate hypothesis that $H$ possesses a faithful irreducible $\restr{c}{H}$-projective representation $\ol \rho$, \Cref{cor:centerpreservingprojirrep2} can be extended to every class $c \in \rH^2(G,\muF)$ as follows: there exists an irreducible $c$-projective representation $\ol{\sigma}$ of $G$ such that $\ker(\ol \sigma) \cap H \leq \rZ_c(G)$ and $\restr{\ol{\sigma}}{H}$ contains $\ol \rho$. 
If in addition $\muF$ is $|G|$-divisible, or $\rZ_c(G) \cap H = \{\ee\}$, then in fact $\ol{\sigma}$ is faithful on $H$. 
We leave the details to the reader; the argument is similar to the proof of \Cref{cor:centerpreservingprojirrep2}. 
It turns out that this statement is superseded by the next proposition, whose proof relies on our endmost application of Frobenius reciprocity. 
\end{rem}

\begin{prop} \label{prop:inducefpr}
Let $H \leq G$, and fix a class $c \in \rH^2(G,\Fi^\times)$. 
Suppose that $H$ possesses a faithful irreducible $\restr{c}{H}$-projective representation $\ol{\rho}$. 
Then there exists an irreducible $c$-projective representation $\ol{\sigma}$ of $G$ whose restriction to $H$ contains $\ol{\rho}$, hence is faithful on $H$. 
\end{prop}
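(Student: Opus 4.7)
The plan is to pass to a finite central extension, apply Frobenius reciprocity there, and then descend. Since $\ol{\rho}$ is finite dimensional, the class $\restr{c}{H}$ lies in $\rH^2(H,\muF)$; moreover, the hypothesis forces $c \in \rH^2(G,\muF)$ as well, since inducing a lift of $\ol{\rho}$ from $\tilde{H}_c$ to $\tilde{G}_c$ (the index $[\tilde{G}_c:\tilde{H}_c]=[G:H]$ being finite) produces a finite-dimensional $c$-projective representation of $G$, whose mere existence is equivalent to $c \in \rH^2(G,\muF)$ by the Yamazaki criterion recalled in \Cref{subsec:projectiverepresentations}.

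Fix then a representative $\zz \in \rZ^2(G,\muF)$ of $c$ and form the finite central extension $G_\zz$ of $G$ by $\muz$, together with the preimage $H_\zz \leq G_\zz$ of $H$, a central extension of $H$ by $\muz$ of class $\restr{c}{H}$. Lift $\ol{\rho}$ to an irreducible linear representation $\rho \colon H_\zz \to \GL(V)$ in which $\muz$ acts by the inclusion $\muz \hookrightarrow \Fi^\times$ as scalar operators; because both this scalar action and $\ol{\rho}$ are faithful, $\rho$ itself is faithful. Let $\sigma$ be any irreducible constituent of $\ind_{H_\zz}^{G_\zz}(\rho)$; by Frobenius reciprocity, $\restr{\sigma}{H_\zz}$ contains $\rho$.

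The crucial step is to verify that $\sigma$ descends to a $c$-projective representation of $G$, that is, that $\muz$ acts on $\sigma$ by the prescribed inclusion character. Since $\muz$ is central in $G_\zz$ and $\muz \leq \Fi^\times$, the $\muz$-isotypic decomposition of the underlying space of $\sigma$ into eigenspaces for $\Fi$-valued characters of $\muz$ is $G_\zz$-invariant, and irreducibility of $\sigma$ forces a single isotypic component. Hence $\muz$ acts on $\sigma$ by a single character, which because $\rho$ sits inside $\restr{\sigma}{H_\zz}$ must coincide with the inclusion $\muz \hookrightarrow \Fi^\times$. Consequently, $\sigma$ projectivizes to a $c$-projective representation $\ol{\sigma}$ of $G = G_\zz/\muz$ whose restriction to $H$ contains $\ol{\rho}$ by construction, and is therefore faithful on $H$. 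The main (minor) obstacle is precisely this matching of the scalar character of $\muz$ on the irreducible $\sigma$; beyond that, the argument is a clean application of Frobenius reciprocity in the finite central extension $H_\zz \leq G_\zz$.
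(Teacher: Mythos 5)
Your reduction to the finite central extension $G_\zz$ rests on a false premise, and this is a genuine gap. You assert that the mere existence of a finite-dimensional $c$-projective representation of $G$ is equivalent to $c \in \rH^2(G,\muF)$ ``by the Yamazaki criterion''; but the criterion recalled in \Cref{subsec:projectiverepresentations} is different: $c$ lies in $\rH^2(G,\muF)$ if and only if some $c$-projective representation lifts to a \emph{finite central extension} of $G$. Existence of finite-dimensional $c$-projective representations is automatic for \emph{every} $c \in \rH^2(G,\Fi^\times)$ (left multiplication on the twisted group algebra $\Fi_c[G]$ gives one of dimension $|G|$), so your version of the criterion would force $\rH^2(G,\muF) = \rH^2(G,\Fi^\times)$ for all $G$ and $\Fi$, which \Cref{exe:liftcyclicgroup} refutes. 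The same objection applies to your opening claim that finite-dimensionality of $\ol{\rho}$ places $\restr{c}{H}$ in $\rH^2(H,\muF)$. Concretely, take $\Fi = \Q$, $G = C_4 = \langle g \rangle$, $H = \langle g^2 \rangle$, and let $c$ be the class of $2$ under $\rH^2(C_4,\Q^\times) \cong \Q^\times/(\Q^\times)^4$; then $\restr{c}{H}$ is the class of $2$ in $\Q^\times/(\Q^\times)^2$, and the representation of \Cref{exe:liftcyclicgroup} with $n = 2$, $a = 2$ is a faithful irreducible $\restr{c}{H}$-projective representation of $H$. So the hypotheses of the proposition hold, yet neither $c$ nor $\restr{c}{H}$ lies in $\rH^2(\cdot,\muF)$, and the finite extensions $H_\zz \leq G_\zz$ to which you lift $\ol{\rho}$ and in which you induce simply do not exist. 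Your argument therefore only establishes the proposition when $c \in \rH^2(G,\muF)$ and $\restr{c}{H} \in \rH^2(H,\muF)$ --- essentially the setting of \Cref{cor:centerpreservingprojirrep2} --- whereas the point of the statement is that it holds for arbitrary $c \in \rH^2(G,\Fi^\times)$.

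The repair is precisely the parenthetical construction you mention and then abandon: work directly with the (generally infinite) extensions $\tilde{H}_d \leq \tilde{G}_c$, where $d = \restr{c}{H}$ and $\tilde{H}_d$ is the preimage of $H$ in $\tilde{G}_c$. The index $[\tilde{G}_c : \tilde{H}_d] = [G:H]$ is finite, so $\ind_{\tilde{H}_d}^{\tilde{G}_c}(\rho)$ is a finite-dimensional representation; the central subgroup $\Fi^\times$ acts on it by scalars directly by the definition of induction (it is central and acts by scalars in $\rho$), so no isotypic-component argument is needed; and Frobenius reciprocity for a finite-index subgroup yields an irreducible constituent $\sigma$ whose restriction to $\tilde{H}_d$ contains $\rho$. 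Projectivizing $\sigma$ gives the desired $\ol{\sigma}$. This is the paper's proof. Your remaining steps inside $G_\zz$ --- the $\muz$-eigenspace decomposition and the matching of the scalar character via $\rho \subseteq \restr{\sigma}{H_\zz}$ --- are correct as far as they go, but they live entirely within the special case identified above.
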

\begin{proof}
Set $d = \restr{c}{H}$. 
Recall that $\tilde{G}_c$ (resp.\ $\tilde{H}_d$) denotes the central extension of $G$ (resp.\ $H$) determined by the class $c$ (resp.\ $d$), so that we have the following homomorphism of central extensions:
\[\begin{tikzcd}
	1 & {\Fi^\times} & {\tilde{G}_c} & G & 1 \\
	1 & {\Fi^\times} & {\tilde{H}_d} & H & 1
	\arrow[from=1-1, to=1-2]
	\arrow[from=1-2, to=1-3]
	\arrow[from=1-3, to=1-4]
	\arrow[from=1-4, to=1-5]
	\arrow[from=2-1, to=2-2]
	\arrow[equal, from=2-2, to=1-2]
	\arrow[from=2-2, to=2-3]
	\arrow[hook, from=2-3, to=1-3]
	\arrow[from=2-3, to=2-4]
	\arrow[hook, from=2-4, to=1-4]
	\arrow[from=2-4, to=2-5]
\end{tikzcd}\]
By construction, $\ol{\rho}$ lifts to a representation $\rho$ of $\tilde{H}_c$, for which the subgroup of scalar transformations is precisely $\Fi^\times \leq \tilde{H}_d$ because $\ol{\rho}$ is faithful. 
Pick an irreducible constituent $\sigma$ of $\textstyle \ind_{\tilde{H}_d}^{\tilde{G}_c} \rho$. 
By definition of the induced representation, the central kernel $\Fi^\times \leq \tilde{G}_c$ acts by scalars under $\textstyle \ind_{\tilde{H}_d}^{\tilde{G}_c} \rho$, hence also under $\sigma$. 
By Frobenius reciprocity, $\rho$ is contained in the restriction of $\sigma$ to $\tilde{H}_d$. 
This shows that the projectivization $\ol{\sigma}$ of $\sigma$ is an irreducible $c$-projective representation of $G$, whose restriction to $H$ contains $\ol{\rho}$. 
\end{proof}

\section*{Acknowledgments}
We kindly thank P.\ de la Harpe, G.\ Navarro, J.-P.\ Serre, and A.\ Valette for their helpful comments and corrections, that improved the quality of this paper.

%%%%% Bibliography %%%%%

\newpage

\bibliographystyle{alphaabbrv}
\bibliography{references}
    
\end{document}